\providecommand{\ifdprodangle}{\iffalse} 
\newcommand{\hylabel}[1]{\ifpdf\hypertarget{#1_l}{}\label{#1}\else\label{#1}\fi}
\newcommand{\hyautoref}[1]{\ifpdf\hyperlink{#1_l}{\autoref{#1}}\else\autoref{#1}\fi}
\newcommand{\includegraphix}[1]{\ifpdf\includegraphics{#1.pdf}\else\includegraphics{#1.eps}\fi}
\DeclareMathOperator{\cfun}{\theta} 
\DeclareMathOperator{\tfun}{\tau} 
\DeclareMathOperator*{\clco}{\closure\conv}
\DeclareMathOperator*{\compose}{\circ}
\DeclareMathOperator*{\closure}{\text{cl}}
\DeclareMathOperator*{\conv}{\text{conv}}
\DeclareMathOperator{\diff}{\partial}
\DeclareMathOperator{\hadamard}{\odot}
\DeclareMathOperator*{\identity}{\mathbf{id}}
\DeclareMathOperator*{\rank}{\text{rank}}
\DeclareMathOperator*{\radius}{\varrho}
\DeclareMathOperator*{\transdir}{\pi}
\providecommand{\CLARKE}{\text{C}}
\providecommand{\FRECHET}{\text{F}}
\providecommand{\LIMITING}{\text{L}}
\providecommand{\PROXIMAL}{\text{P}}
\providecommand{\FT}{\text{Q}}
\providecommand{\RUBINOV}{\text{S}}
\providecommand{\SINGULAR}{\text{L}_\infty}
\providecommand{\VISCOSITY}{\text{V}}
\providecommand{\cone}[1]{N^{#1}}
\DeclareMathOperator{\couplingcone}{\cone{\RUBINOV}}
\DeclareMathOperator{\decouplingcone}{\cone{\FT}}
\DeclareMathOperator{\proximal}{\subdiff{\PROXIMAL}}
\DeclareMathOperator{\viscosity}{\subdiff{\VISCOSITY}}
\DeclareMathOperator{\frechet}{\subdiff{\FRECHET}}
\DeclareMathOperator{\limiting}{\subdiff{\LIMITING}}
\DeclareMathOperator{\singular}{\subdiff{\SINGULAR}}
\DeclareMathOperator{\clarke}{\subdiff{\CLARKE}}
\DeclareMathOperator{\coupling}{\subdiff{\RUBINOV}}
\DeclareMathOperator{\decoupling}{\subdiff{\FT}}
\DeclareMathOperator{\domain}{\text{dom}}
\DeclareMathOperator{\epi}{\text{epi}}
\DeclareMathOperator{\graph}{\text{graph}}
\DeclareMathOperator{\gradient}{\nabla}
\DeclareMathOperator{\hessian}{\nabla^2}
\DeclareMathOperator{\local}{\text{\rm loc}}
\DeclareMathOperator{\proj}{\text{Pr}}
\DeclareMathOperator*{\sign}{\text{sgn}}
\let\ifdprodangle=\iftrue
\providecommand{\subdiff}[1]{\partial^{#1}}
\providecommand{\tdiff}[1]{\text{{#1}-subdifferential~}}
\providecommand{\ORTHO}[1]{\ensuremath{{^{\perp}_{#1}}}}
\providecommand{\gendiff}[1]{\ensuremath{^{({#1})}}}
\providecommand{\ortho}{\ORTHO{}}
\providecommand{\PARA}[1]{\ensuremath{{^{\parallel}_{#1}}}}
\providecommand{\para}{\PARA{}}
\providecommand{\dprodl}{}
\providecommand{\dprodr}{}
\providecommand{\dprod}[2]{\ifdprodangle\bigl\langle\dprodl\else\bigl(\fi{#1},{#2}\ifdprodangle\bigr\rangle\dprodr\else\bigr)\fi}
\providecommand{\toinf}{\to\infty}
\providecommand{\abs}[1]{\lvert#1\rvert}
\providecommand{\card}[1]{\abs{#1}}
\providecommand{\All}{\text{~for all~}}
\providecommand{\all}{\All}
\providecommand{\feasible}{\mathcal{F}}
\providecommand{\inv}{{^{-1}}}
\providecommand{\tdiff}[1]{\text{{#1}-subdifferential}}
\providecommand{\Lcal}{\mathcal{L}}
\providecommand{\Qcal}{\mathcal{Q}}
\providecommand{\Rset}{\mathbb{R}}
\providecommand{\otherwise}{\text{~otherwise~}}
\providecommand{\st}{\text{~s.t.~}}
\providecommand{\dprod}[2]{\ifdprodangle\bigl\langle\dprodl\else\{\fi{#1},{#2}\ifdprodangle\bigr\rangle\dprodr\else\}\fi}
\providecommand{\halfnum}[1]{\frac{#1}{2}}
\providecommand{\Half}{\halfnum{1}}
\providecommand{\norm}[1]{\parallel{#1}\parallel}
\providecommand{\negspc}{\ensuremath{\!\!}}
\newtheorem{corollary}{Corollary}
\newtheorem{definition}{Definition}
\newtheorem{lemma}{Lemma}
\newtheorem{remark}{Remark}
\newtheorem{proposition}{Proposition}
\newtheorem{theorem}{Theorem}
\begin{document}
\title{Generalized Subdifferentials \\of the Sign Change Counting Function}
\author{Dominique Fortin}
\address{Inria, Domaine de Voluceau, Rocquencourt,
B.P. 105, 78153 Le Chesnay Cedex, France }
\email{Dominique.Fortin@inria.fr}
\author{Ider Tseveendorj}
\address{Laboratoire PRiSM, Universit\'e de Versailles,
45, avenue des \'Etats-Unis, 78035 Versailles Cedex, France}
\email{Ider.Tseveendorj@prism.uvsq.fr}
\keywords{subdifferential, sign counting.}

\begin{abstract}
The counting function on binary values is extended to the signed
case in order to count the number of transitions between contiguous
locations. A generalized subdifferential for this sign change counting
function is given where classical subdifferentials remain intractable.
An attempt to prove global optimality at some point, for the 4-dimensional first non trivial
example, is made by using a sufficient condition specially tailored
among all the cases for this subdifferential. 
\end{abstract}
\maketitle
\keywords{
}
\section{Introduction}\hylabel{sec:intro}
Two main lines of research are currently challenging in the
non smooth optimization field: the rank function \cite{MR2925620},
for its prominent usage in many
real life applications 
and the extended subdifferentials \cite{MR1834382},
for tailoring new
search techniques for improving local solutions.
The rank function is intimately related with both the singular values
and the eigenvalues in case of symmetric matrices
\cite{MR1704112}; the latter case itself, greatly impacts the
decomposition of a function into a difference of convex functions (DC
decomposition). However, the 0 spectrum remains mixed with either
the strictly positive or the strictly negative eigenvalues and most of
its significance is usually lost during the standard treatment which
linearizes the {\em difficult} part of the DC decomposition.
Motivated by this {\em drawback}, we are aiming at a three way
decomposition of the spectrum instead.
On the other hand, more subdifferentials have been proposed
to be able to prove either necessary and sufficient or sufficient
optimality conditions for some classes of non smooth functions
to be minimized. In spite of this stream, we are motivated to
design rather higher order than $\epsilon$ approximated
subdifferentials.

In this article we address both aspects by introducing the sign change counting
function, a natural extension to the sign counting function at the
heart of the rank function.

In \hyautoref{sec:notations}, we remind the notations for the usual
subdifferentials with their inclusion chain;  
in \hyautoref{sec:signcount} we revisit the sign counting function 
results \cite{MR3035526} by giving them a concise proof along with bounds,
in order to facilitate the study of the sign change counting function.
The \FT-subdifferential is introduced in \hyautoref{sec:gensubdiff}
where its relationship with usual and extended
subdifferentials is proven. A sufficient condition is derived
in a particular case and its effectiveness is shown by proving
global optimality of a 1-dimensional function borrowed from
the litterature in this stream.
Finally, a \FT-subdifferential is given for the sign change counting function 
in \hyautoref{sec:signchange} and an attempt to prove the sufficient
condition above is made on the 4-dimensional first non trivial example.

\section{Notations}\hylabel{sec:notations}
Given an extended real valued function $f:\Rset^n\to\Rset\cup\{\infty\}$,
we use standard notations for  domain, graph, epigraph: 
\begin{align*}
\domain(f)=&\bigl\{x\in\Rset^n\lvert f(x)<\infty\bigr\}\\
\graph(f)=&\bigl\{(x,f(x))\in\Rset^n\times\Rset\lvert x\in\domain(f)\bigr\}\\
\epi(f)=&\bigl\{(x,\alpha)\in\Rset^n\times\Rset\lvert f(x)\leq\alpha\bigr\}\\
\end{align*}
and the orthogonal, ball sets are denoted by
\begin{align*}
X\ortho(x)=&\bigl\{y\in\Rset^n\lvert \dprod{y}{x}=0 \bigr\}\\
B(x,\radius)=&\bigl\{y\in\Rset^n\lvert \norm{y-x}\leq\radius\bigr\}.\\
\end{align*}
It is assumed that $f$ is lower semicontinuous, equivalently $\epi(f)$
is a closed subset of $\Rset^n\times\Rset$. The closure and convex hull operators are 
denoted by $\closure$ and $\conv$ respectively and a function $\Rset\to\Rset$
is implictly extended to a map $\Rset^n\to\Rset^n$ by applying it componentwise.

When $f$ is differentiable its gradient is denoted by $\gradient f(x)$,
otherwise $x^\star$ stands for the subgradient of $f$  at $x$ and various the well known subdifferentials are defined respectively in the following ways
\begin{align*}
x^\star\in\proximal{f(x)}&\iff \\ \exists \sigma\in\Rset_+,& 
\st f(x)\leq f(y)-\dprod{x^\star}{y-x}+\sigma\norm{y-x}^2 \all {y\in B(x,\radius)},\\
x^\star\in\viscosity{f(x)}&\iff \\ \exists g\in C^1(\Rset^n\to&\Rset), \gradient g(x)=x^\star
\st f(x)\leq f(y)-g(y)+g(x) \all {y\in B(x,\radius)},\\
x^\star\in\frechet{f(x)}&\iff{\lim\inf}_{y\to x,y\neq x}\frac{f(y)-f(x)-\dprod{x^\star}{y-x}}{\norm{y-x}}\geq 0,\\
x^\star\in\limiting{f(x)}&\iff \exists x_n,x_n^\star\in\frechet f(x) \st \begin{cases}
\lim_{n\toinf} x_n=x\\
\lim_{n\toinf} f(x_n)=f(x)\\
\lim_{n\toinf} x^{\star}_n=x^\star
\end{cases},\\
x^\star\in\singular{f(x)}&\iff \exists x_n,x_n^\star\in\frechet{f(x)}, \exists t_n\to 0^+ \st
\begin{cases}
\lim_{n\toinf} x_n=x\\
\lim_{n\toinf} f(x_n)=f(x)\\
\lim_{n\toinf} x^{\star}_n=\infty\\
\lim_{n\toinf} t_nx^{\star}_n=x^\star
\end{cases},\\
x^\star\in\clarke{f(x)}&\iff x^\star\in
\clco \bigl(\limiting{f(x)}+\singular{f(x)} \bigr), x\in\domain(f).              
\end{align*}

\begin{remark}
Frechet subdifferential appears as a second order approximation
of a regular differential, since 
$\frac{f(y)-f(x)}{\norm{y-x}}\geq \dprod{x^\star}{\frac{y-x}{\norm{y-x}}}$
in the vicinity of $x$.
\end{remark}
For finite dimensional cases, the following inclusion chain is known 
\begin{align*}
\proximal{f(x)}\subset\viscosity{f(x)}=\frechet{f(x)}\subset\limiting{f(x)}\subset\clarke{f(x)}.
\end{align*}

\section{The sign counting function}\hylabel{sec:signcount}
The function $\cfun$ counting the number of components different from $0$ arises
in many applications 
and its subdifferentials have been studied recently in \cite{MR3035526}; 
in this section, we revisit the proofs in a concise way.
We recall that the sign function (or signum function) of a real number $\alpha$ is defined as follows:
\begin{align*}
\sign(\alpha) =1 \begin{cases}
-1, &\mbox { if  }\alpha < 0,\\
0, &\mbox { if  }\alpha = 0,\\
1, &\mbox { if  }\alpha <0.\\
\end{cases}
\end{align*}
Let active index set and other related sets useful for defining them, be
\begin{align*}
{I\para}(x)=&\bigl\{i\lvert \sign(x_i)=0\bigr\},\\
{I\ortho}(x)=&\bigl\{i\lvert i\not\in{I\para}(x)\bigr\},\\
X_{I\para}(x)=&\bigl\{y\in\Rset^n\lvert \sign(y_i)=0 \all i\in I_{I\para}(x)\bigr\},\\
X_{I\ortho}(x)=&\bigl\{y\in\Rset^n\lvert \sign(y_i)=0 \all i\in I_{I\ortho}(x)\bigr\},\\
\cfun(x)=&\card{I\ortho(x)}=\sum\abs{\sign(x_i)}.
\end{align*}
\begin{theorem}[\cite{MR3035526}]\hylabel{thm:countortho}
\begin{align*}
\proximal \cfun(x)=\frechet \cfun(x)=X_{I\ortho}(x)
\end{align*}
\end{theorem}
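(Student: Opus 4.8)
The plan is to derive the double equality from the inclusion chain $\proximal\cfun(x)\subseteq\frechet\cfun(x)$ recalled above, so that it suffices to prove the two inclusions $X_{I\ortho}(x)\subseteq\proximal\cfun(x)$ and $\frechet\cfun(x)\subseteq X_{I\ortho}(x)$. Both rest on one elementary observation about $\cfun$ near $x$: since $I\ortho(x)$ is finite and each $x_i$ with $i\in I\ortho(x)$ is bounded away from $0$, there is a radius $\radius>0$ such that $y_i\neq0$ for every $i\in I\ortho(x)$ whenever $y\in B(x,\radius)$; for such $y$,
\begin{align*}
\cfun(y)-\cfun(x)=\card{\{i\in I\para(x)\mid y_i\neq0\}}\geq0,
\end{align*}
with equality exactly when $y$ vanishes on $I\para(x)$. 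Moreover, for $x^\star\in X_{I\ortho}(x)$ we have $x^\star_i=0$ for $i\in I\ortho(x)$ and $x_i=0$ for $i\in I\para(x)$, so $\dprod{x^\star}{y-x}=\sum_{i\in I\para(x)}x^\star_iy_i$ and therefore $\abs{\dprod{x^\star}{y-x}}\leq\norm{x^\star}\,\norm{y-x}$.

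For $X_{I\ortho}(x)\subseteq\proximal\cfun(x)$ I would fix $x^\star\in X_{I\ortho}(x)$, shrink $\radius$ further so that in addition $\norm{x^\star}\radius<1$, and verify the proximal inequality with $\sigma=0$. If $y\in B(x,\radius)$ vanishes on $I\para(x)$, both sides of $\cfun(x)\leq\cfun(y)-\dprod{x^\star}{y-x}+\sigma\norm{y-x}^2$ reduce to $\cfun(x)$; otherwise $\cfun(y)-\cfun(x)\geq1>\norm{x^\star}\radius\geq\abs{\dprod{x^\star}{y-x}}$, so again $\cfun(y)-\dprod{x^\star}{y-x}\geq\cfun(x)$. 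Hence $x^\star\in\proximal\cfun(x)$, and a fortiori $x^\star\in\frechet\cfun(x)$.

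For the reverse inclusion I would argue by contraposition. Assume $x^\star\notin X_{I\ortho}(x)$, i.e.\ $x^\star_j\neq0$ for some $j\in I\ortho(x)$, and test the Fr\'echet difference quotient along the points $y^t$ that agree with $x$ except that $y^t_j=x_j+t\,\sign(x^\star_j)$, $t>0$. For $t$ small $y^t_j\neq0$ and no other coordinate changes, so $\cfun(y^t)=\cfun(x)$, $\norm{y^t-x}=t$ and $\dprod{x^\star}{y^t-x}=t\abs{x^\star_j}$; the quotient $\bigl(\cfun(y^t)-\cfun(x)-\dprod{x^\star}{y^t-x}\bigr)/\norm{y^t-x}$ thus equals $-\abs{x^\star_j}<0$ for all small $t>0$, so its lower limit as $y^t\to x$ is strictly negative and $x^\star\notin\frechet\cfun(x)$. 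This yields $\frechet\cfun(x)\subseteq X_{I\ortho}(x)$ and finishes the argument.

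The only step that seems to demand real care is the coupled choice of $\radius$ in the first inclusion: it must be small enough both to keep every nonzero coordinate of $x$ nonzero on $B(x,\radius)$ and to keep the linear term $\abs{\dprod{x^\star}{y-x}}$ below $1$. Granted that, the decisive fact is essentially combinatorial — an integer jump of size at least $1$ in $\cfun$ cannot be cancelled by a linear perturbation of norm less than $1$ — which is exactly why the proximal modulus $\sigma$ may be taken to be $0$, and hence why the proximal and Fr\'echet subdifferentials already coincide here; everything else is a direct substitution into the definitions.
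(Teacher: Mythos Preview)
Your proof is correct and follows the same overall scaffold as the paper's---use the chain $\proximal\cfun(x)\subseteq\frechet\cfun(x)$ and prove $X_{I\ortho}(x)\subseteq\proximal\cfun(x)$ together with $\frechet\cfun(x)\subseteq X_{I\ortho}(x)$---but the execution is noticeably more direct. For the first inclusion, the paper splits into the case where $y-x$ keeps $\cfun$ constant and a residual case handled by constructing a positive $\sigma$ from a minimization over the ball; you instead exploit the integer gap $\cfun(y)-\cfun(x)\geq 1$ off the level set and simply shrink the ball so that $\norm{x^\star}\radius<1$, which lets you take $\sigma=0$ outright. For the reverse inclusion, the paper first establishes $\dprod{x^\star}{y}=0$ for all level-preserving directions $y$ via a pair of sign-flip contradictions and then infers $x^\star\in X_{I\ortho}(x)$; you directly exhibit a single coordinate perturbation in a slot $j\in I\ortho(x)$ along which the Fr\'echet quotient equals $-\abs{x^\star_j}<0$. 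The paper's route is closer in spirit to classical variational arguments (orthogonality on a subspace, then identification of that subspace), while yours is shorter and lets the combinatorial structure of $\cfun$---integer-valued and locally nondecreasing---do all the work; in particular your observation that $\sigma=0$ already suffices makes the equality $\proximal\cfun(x)=\frechet\cfun(x)$ transparent rather than a by-product.
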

\begin{proof}
First observe that $X_{I\ortho}(0)=\Rset^n\supset\frechet \cfun(0)$.
\begin{itemize}
\item[$\frechet \cfun(x)\subset X_{I\ortho}(x)$ : ] 
Any $x$ is a local minimum for $\cfun$,
i.e. there exists $\radius$ such that $\cfun(x)\leq \cfun(y)$ for all $y\in B(x,\radius)$.
We 
consider $x\neq 0$, $\cfun(y)=\cfun(x)\iff y\in X(x)$.
For $y\in X(x)\setminus 0$, there exists $\epsilon>0$ such that
for all $0\leq\tau\leq\epsilon$, $x+\tau y\in B(x,\radius)$.
Therefore both $\cfun(x+\tau y)=\cfun(x)$ and $x^\star\in\frechet \cfun(x)$
imply ${\lim\inf}_{\tau\to 0}\frac{-\tau\dprod{x^\star}{y}}{\norm{y}}\geq 0$.
Suppose $\dprod{x^\star}{y}<0$ and consider $\bar y=-y$, then 
$\bar y\in X(x)$ and $\dprod{x^\star}{\bar y}>0$ a contradiction;
therefore $\dprod{x^\star}{y}=0$.
Now, suppose $x^\star\in X\ortho(x)\setminus X_{I\ortho}(x)$
then there are at least 2 indices $i, j$ such that 
$x^\star_i y_i+x^\star_j y_j=0$ for $y\in X(x)$
but then let $\bar y=y$ except for index $i$ whose value is $\bar y_i=-y_i$;
it leads to $x^\star_i \bar y_i+x^\star_j y_j=-2x^\star_i y_i\neq 0$, a contradiction too.
\item[$X_{I\ortho}(x)\subset\proximal \cfun(x)$ : ] 
By lower semicontinuity, certainly $0\in\proximal \cfun(x)$, so  we may assume
$x^\star\neq 0$.
For $y\in B(x,\xi)\subset B(x,\radius)$ 
\begin{itemize}
\item[$\bullet$] if $y-x\in X(x)$ then 
$\dprod{x^\star}{y-x}=0$ together with $\cfun(y)=\cfun(x)$ 
imply $\cfun(y)+\sigma\norm{y-x}^2=\cfun(x)-\dprod{x^\star}{y-x}+\sigma\norm{y-x}\geq \cfun(x)$
and $x^\star\in\proximal \cfun(x)$,
\item[$\bullet$] otherwise, $y\neq x$ with $x^\star\in X(x)$; let $\underline{y}=\arg\min_{y\in B(x,\xi)} \cfun(y)$
and $\underline{\sigma}=\max\biggl(0,\frac{\dprod{x^\star}{y-x}+\cfun(x)-\cfun(\underline{y})}{\norm{y-x}}\biggr)$
which certainly exists.
Therefore for all $\sigma>\underline{\sigma}$ we get $\cfun(y)-\dprod{x^\star}{y-x}+\sigma\norm{y-x}\geq \cfun(x)$.
\end{itemize}
\end{itemize}
\end{proof}
\begin{theorem}[\cite{MR3035526}]
\begin{align*}
\clarke \cfun(x)=X_{I\ortho}(x)
\end{align*}
\end{theorem}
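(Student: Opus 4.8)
The plan is to read off $\clarke\cfun(x)=\clco\bigl(\limiting\cfun(x)+\singular\cfun(x)\bigr)$ from \hyautoref{thm:countortho} together with the inclusion chain, the main structural point being that $X_{I\ortho}(x)$, which equals $\bigl\{y\in\Rset^n\lvert y_i=0 \all i\in I\ortho(x)\bigr\}$, is a \emph{closed linear subspace} of $\Rset^n$. In particular it is a closed convex cone with $X_{I\ortho}(x)+X_{I\ortho}(x)=X_{I\ortho}(x)$, so every Minkowski sum, cone, closure and convex-hull operation entering the definition of $\clarke\cfun(x)$ acts trivially on it; the whole content of the statement thus reduces to evaluating $\limiting\cfun(x)$ and $\singular\cfun(x)$.

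First I would compute $\limiting\cfun(x)$. The inclusion $X_{I\ortho}(x)=\frechet\cfun(x)\subseteq\limiting\cfun(x)$ is immediate from the chain. For the converse, take $x^\star\in\limiting\cfun(x)$ with witnessing sequences $x_n\to x$, $\cfun(x_n)\to\cfun(x)$, $x_n^\star\in\frechet\cfun(x_n)$, $x_n^\star\to x^\star$. Since $\cfun$ is integer valued, $\cfun(x_n)=\cfun(x)$ for $n$ large; since $x_n\to x$, every coordinate of $x$ that is nonzero stays nonzero in $x_n$ for $n$ large, i.e. $I\ortho(x)\subseteq I\ortho(x_n)$, and then equality of the cardinalities $\card{I\ortho(x_n)}=\cfun(x_n)=\cfun(x)=\card{I\ortho(x)}$ forces $I\ortho(x_n)=I\ortho(x)$ for $n$ large. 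Hence $x_n^\star\in\frechet\cfun(x_n)=X_{I\ortho}(x_n)=X_{I\ortho}(x)$ by \hyautoref{thm:countortho}, and letting $n\toinf$ inside the closed set $X_{I\ortho}(x)$ gives $x^\star\in X_{I\ortho}(x)$; thus $\limiting\cfun(x)=X_{I\ortho}(x)$. The same index stabilization handles $\singular\cfun(x)$: for $x^\star\in\singular\cfun(x)$ with witnesses $x_n\to x$, $\cfun(x_n)\to\cfun(x)$, $x_n^\star\in\frechet\cfun(x_n)$, $t_n\to0^+$, $t_nx_n^\star\to x^\star$, the argument above again yields $x_n^\star\in X_{I\ortho}(x)$ for $n$ large, whence $t_nx_n^\star\in X_{I\ortho}(x)$ because $X_{I\ortho}(x)$ is a cone, and closedness gives $x^\star\in X_{I\ortho}(x)$; so $\singular\cfun(x)\subseteq X_{I\ortho}(x)$.

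Assembling the pieces, $\limiting\cfun(x)+\singular\cfun(x)\subseteq X_{I\ortho}(x)+X_{I\ortho}(x)=X_{I\ortho}(x)$, and since $X_{I\ortho}(x)$ is already closed and convex we get $\clarke\cfun(x)=\clco\bigl(\limiting\cfun(x)+\singular\cfun(x)\bigr)\subseteq X_{I\ortho}(x)$; the reverse inclusion $X_{I\ortho}(x)=\limiting\cfun(x)\subseteq\clarke\cfun(x)$ is part of the inclusion chain. The degenerate case $x=0$, where $I\ortho(0)=\emptyset$ and $X_{I\ortho}(0)=\Rset^n$, is subsumed since already $\frechet\cfun(0)=\Rset^n$ by \hyautoref{thm:countortho}. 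There is no genuine obstacle here: everything rests on the elementary observation that along any sequence realizing $\cfun(x_n)\to\cfun(x)$ one eventually has $I\ortho(x_n)=I\ortho(x)$, so that near $x$ the Fr\'echet subdifferential is constantly $X_{I\ortho}(x)$; the only things to verify with care are that $X_{I\ortho}(x)$ is truly a linear subspace, so that the outer $\clco$ and the sum are vacuous, and the index bookkeeping at $x=0$.
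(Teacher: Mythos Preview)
Your argument is correct and follows the same architecture as the paper's: use \hyautoref{thm:countortho} to identify $\frechet\cfun$, then compute $\limiting\cfun(x)$ and $\singular\cfun(x)$ and read off $\clarke\cfun(x)$ from the fact that $X_{I\ortho}(x)$ is a closed linear subspace, so that the outer $\clco$ and the Minkowski sum act trivially. The difference is in how the two containments are obtained. For $\limiting\cfun(x)\subseteq X_{I\ortho}(x)$ the paper runs a contradiction argument based on the Fr\'echet inequality at the limiting subgradient; your index-stabilization argument---$x_n\to x$ and $\cfun(x_n)=\cfun(x)$ force $I\ortho(x_n)=I\ortho(x)$ eventually, hence $x_n^\star\in X_{I\ortho}(x)$ and one passes to the limit in a closed set---is more direct and makes the mechanism explicit. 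For the singular part the paper asserts $\singular\cfun(x)=\emptyset$ via an informal ``$\infty_I\in\limiting\cfun(x)$'' device, while you prove only $\singular\cfun(x)\subseteq X_{I\ortho}(x)$ by the same stabilization; this weaker statement is all that is needed and it sidesteps a delicate point, since under the paper's own definition $\singular\cfun(x)$ need not be empty when $I\para(x)\neq\emptyset$ (one may take $x_n=x$, $x_n^\star=n v$ with $0\neq v\in X_{I\ortho}(x)$, $t_n=1/n$). In short, same strategy, but your execution is more elementary and avoids the questionable emptiness claim.
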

\begin{proof}
From \hyautoref{thm:countortho}, $X_{I\ortho}(x)\subset\clarke \cfun(x)$.
\begin{itemize}
\item $\limiting \cfun(x)\subset X_{I\ortho}(x)$.
 \\
Since $X_{I\ortho}(0)=\Rset^n$, we may assume $0\neq x^\star\in\limiting \cfun(x)$;
by contradiction, 
\begin{align*}
{\lim\inf}_{y\to x\neq y}\frac{\cfun(y)-\cfun(x)-\dprod{x^\star}{y-x}}{\norm{y-x}}> 0\\
\dprod{x_n^\star}{x_n-x}=0
\end{align*}
since $x^\star\not\in X_{I\ortho}(x)$ by contradictory assumption and $x_n^\star\in\frechet \cfun(x_n)$
implies the latter by previous theorem.
However, for $x\neq 0$ we have seen that for sufficiently small $\radius$
and $x_n\in B(x,\radius)$, we have $\cfun(x_n)-\cfun(x)\leq 0$ which violates first strict inequality.
Overall $\limiting \cfun(x)= X_{I\ortho}(x)$.
\item $X_{I\ortho}(x)$ is convex;  by definition, $x_1,x_2\in X_{I\ortho}(x)$ implies $(\alpha x_1+(1-\alpha) x_2)\in X_{I\ortho}(x)$
and $\cfun(\alpha x_1+(1-\alpha) x_2)\leq \card{I(x)}$ for $0\leq\alpha\leq 1$.
\item $\singular \cfun(x)=\emptyset $.
\\
Since projection of $X_{I\ortho}(x)$ on every $i\in I$ is $\proj_i(X_{I\ortho}(x))=\Rset$ is unbounded,
let $\infty_I\in\Rset^n$ be the point whose components in $I$ are infinite. By abuse of notation,
we may consider $\infty_I\in\limiting \cfun(x)$ hence the emptyness of singular limiting subdifferential.
\end{itemize}
The result follows by convexity of $X_{I\ortho}(x)=\limiting \cfun(x)$.
\end{proof}
Let $e$ denote the all 1s vector in $\Rset^n$ and $\abs{\cdot}$ stands for the componentwise absolute value of a given vector.
\begin{lemma}\hylabel{lem:signminor}
\begin{align*}
\dprod{e}{\frac{\abs{x}}{\norm{x}}}\leq \cfun(x)=\norm{\sign(x)}^2
\end{align*}
\end{lemma}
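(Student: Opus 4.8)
The plan is to treat the asserted equality and the asserted inequality in the statement separately, since both are elementary once the definitions of $\cfun$ and $\sign$ are unwound.

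For the equality $\cfun(x)=\norm{\sign(x)}^2$, I would simply note that every component of $\sign(x)$ lies in $\{-1,0,1\}$, so $\sign(x_i)^2=\abs{\sign(x_i)}$, which equals $1$ precisely when $x_i\neq 0$ and $0$ otherwise. Summing over $i$ gives $\norm{\sign(x)}^2=\sum_i\abs{\sign(x_i)}=\card{I\ortho(x)}=\cfun(x)$, i.e. exactly the last identity in the definition of $\cfun$.

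For the inequality I would first dispose of $x=0$, which is trivial once one agrees to read $\abs{x}/\norm{x}$ as $0$ there, and then assume $x\neq 0$. Put $k=\cfun(x)=\card{I\ortho(x)}\geq 1$. Since $\abs{x_i}=0$ for $i\notin I\ortho(x)$, we have $\dprod{e}{\abs{x}}=\sum_{i\in I\ortho(x)}\abs{x_i}$, and Cauchy--Schwarz applied to the vector $(\abs{x_i})_{i\in I\ortho(x)}$ against the all-ones vector on that index set yields
\[
\dprod{e}{\abs{x}}=\sum_{i\in I\ortho(x)}\abs{x_i}\ \leq\ \sqrt{k}\,\Bigl(\sum_{i\in I\ortho(x)}\abs{x_i}^2\Bigr)^{1/2}=\sqrt{k}\,\norm{x}.
\]
Dividing through by $\norm{x}>0$ gives $\dprod{e}{\abs{x}/\norm{x}}\leq\sqrt{k}\leq k=\cfun(x)$, where the last step uses $k\geq 1$.

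There is essentially no obstacle here; the only points worth a sentence are the convention needed at $x=0$ and the observation that the bound is typically far from tight — the Cauchy--Schwarz step is an equality only when all nonzero $\abs{x_i}$ coincide, and even then one still loses the factor $\sqrt{k}$ in passing from $\sqrt{k}$ to $k$ — so the lemma should be read as a crude lower estimate for $\cfun$ rather than a sharp one.
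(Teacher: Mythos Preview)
Your proof is correct. The paper's own argument, however, is more elementary and bypasses Cauchy--Schwarz entirely: it simply observes componentwise that $x_i^2\leq x_i^2+\sum_{j\neq i}x_j^2=\norm{x}^2$, whence $\abs{x_i}/\norm{x}\leq 1=\abs{\sign(x_i)}$ whenever $x_i\neq 0$ (and both sides vanish when $x_i=0$); summing over $i$ gives the inequality at once. Your route via Cauchy--Schwarz is a step longer but produces the sharper intermediate bound $\dprod{e}{\abs{x}/\norm{x}}\leq\sqrt{\cfun(x)}$, which you then relax to $\cfun(x)$ using $k\geq 1$; the paper's componentwise estimate lands on $\cfun(x)$ directly without passing through $\sqrt{\cfun(x)}$. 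So the paper's argument is the shorter of the two, while yours actually quantifies the slack you remark on at the end.
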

\begin{proof}
\begin{align*}
x_i^2\leq x_i^2+\sum_{j\neq i} x_j^2\Rightarrow \frac{\abs{x_i}}{\norm{x}}\leq\abs{\sign(x_i)}
\end{align*}
\end{proof}
In \cite{MR2162512,MR2162513} the authors  proved  that 
$f\compose \sigma$ is lower semicontinuous around the matrix $M\in\Rset^{m\times n}$
for  an absolute symmetric function $f$, which is lower semicontinuous around the singular value vector $\sigma(M)\in\Rset^{\min(m,n)}$.
Using this result, one can calculate a subdifferential for the $\rank$ function thanks to its decomposition  $\cfun\compose \sigma$ 
(see \cite{MR3035526}). 
It could be specialized to symmetric matrices $M$ around eigenvalues
under, say decreasing order for the uniqueness of orthogonal matrix
$U$ such that $M=U\lambda(M)U\inv$ (see \cite{MR1704112,MR1687292}).


\section{More subdifferentials}\hylabel{sec:gensubdiff}
By discarding local conditions in viscosity subdifferential, several authors 
(see \cite{MR1834382} and references therein) proposed a natural extension
of both subdifferential and normal cone.
\begin{definition}\hylabel{def:qdiff}
Let $\Lcal$ be a set of real valued and continuous functions 
$l: \Rset^n\to \Rset\cup\pm\infty$
\begin{align}
\coupling f(x)=&\bigl\{ l\in \Lcal\lvert f(x+d)\geq f(x)+l(x+d)-l(x), \all d\in \Rset^n \bigr\},\\
\couplingcone (x;D) =& \bigl\{l\in \Lcal\lvert l(x+d)-l(x)\leq 0, \all d\in D-x \bigr\},
\end{align}
\end{definition}
where $d\in D-x$ stands for $(x+d)\in D$ to stress the condition on the direction at a given point $x$.

In \cite{MR2148133,MR1449790,MR2324960,MR2496428,MR1856798,MR2602910,MR2336771}, for non convex optimization problems global optimality conditions 
are  derived in terms of  $\coupling f(x),\couplingcone (x;D)$.

Recently, the authors \cite{ft2013} extended above subdifferential further by decoupling
the domain space from the directional change rather than composing it along some direction
and obtained  necessary and sufficient conditions for global optimality too.
\begin{definition}[\tdiff{\FT}$\negspc$]\hylabel{def:qdiff}
Let $\Qcal$ be a set of real valued functions 
$q : \Rset^n\times \Rset^p\to \Rset\cup\pm\infty$
continuous in the first argument and let a map $\transdir : \Rset^p\to \Rset^n$
\begin{align}
\decoupling f(x)=&\bigl\{ q\in \Qcal\lvert f(x+\transdir(d))\geq f(x)+q(x,d)-q(x,0), \all d\in \Rset^p \bigr\}\\
\decouplingcone (x;D) =& \bigl\{q\in \Qcal\lvert q(x,d)-q(x,0)\leq 0, \all \transdir(d)\in D-x\bigr\}
\end{align}
\end{definition}
An element $q \in \decoupling f(x)$ is called a $\FT$-subgradient of $f$ at $x$
and $\transdir(d)\in D-x$ stands for $(x+\transdir(d))\in D$ in
$\FT$-normal cone definition. However, in the lack of well accepted
terminology, the image of a $\FT$-subgradient is improperly refered
to as
a $\FT$-subdifferential (as much as in the $\RUBINOV$-case given by Rubinov) until a right naming is widely accepted.

Clearly, for $q_1,q_2\in\decoupling f(x)$ and $\alpha\in[0,1]$ the following inequalities hold
\begin{align*}
\alpha f(x+\transdir(d))&\geq \alpha(f(x)+q_1(x,d)-q_1(x,0))\\
(1-\alpha) f(x+\transdir(d))&\geq (1-\alpha)(f(x)+q_2(x,d)-q_2(x,0))\\
f(x+\transdir(d))&\geq f(x)+(\alpha q_1+(1-\alpha)q_2)(x,d)-(\alpha q_1+(1-\alpha)q_2)(x,0)
\end{align*}
which proves the {\em convexity} of $\decoupling f(x)$.

Observe that for the special case  $\transdir=\identity$, the identity, we get
\begin{align*}
\decoupling f(x)=&\bigl\{ q\in \Qcal\lvert f(x+d)\geq f(x)+q(x,d)-q(x,0), \all d\in \Rset^n \bigr\}
\end{align*}
and moreover $\coupling f(x) \subset \decoupling f(x)$ by choosing simply $l(y)=q(\cdot,y)$.


In order to relate both subdifferentials to Clarke subdifferential, let us recall the standard
and extended directional derivatives relative to the direction $d\in\Rset^n$
\begin{align*}
f'(x;d)=&\lim_{t\to 0^+}\frac{f(x+td)-f(x)}{t}\\
f^\CLARKE(x;d)=&\limsup_{y\to x,t\to 0^+}\frac{f(y+td)-f(y)}{t}\\
f^\RUBINOV(x;d)=&\sup\bigl\{l(x+d)-l(x)\lvert l\in\coupling f(x)\bigr\}\\
f^\FT(x;d)=&\sup\bigl\{q(x,d)-q(x,0)\lvert q\in\decoupling f(x)\bigr\}
\end{align*}
so that
\begin{align*}
\xi\clarke f(x) \iff \dprod{\xi}{x+d}-\dprod{\xi}{x}\leq f^\CLARKE(x;d),\quad\forall d\in\Rset^n\\ 
l\in\coupling f(x) \iff l(x+d)-l(x)\leq f^\RUBINOV(x;d),\quad\forall d\in\Rset^n\\
q\in\decoupling f(x) \iff q(x,d)-q(x,0)\leq f^\FT(x;\transdir(d)),\quad\forall d\in\Rset^p
\end{align*}
\begin{proposition}\hylabel{prop:couplingClarke}
Let suppose $f^\RUBINOV(x;d)$ be positive subhomogeneous in $d$, namely $f^\RUBINOV(x;td)\leq t f^\RUBINOV(x;d)$ for all $t>0$,
then
\begin{align*}
l\in\coupling f(x) &\Rightarrow f^\CLARKE(x;d)\leq l'(x;d)\\
\clarke f(x)&\subset\coupling f(x)
\end{align*}
\end{proposition}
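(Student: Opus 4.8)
The plan is to route everything through the directional-derivative characterizations collected just above the statement. Recall that $\xi\in\clarke f(x)$ precisely when $\dprod{\xi}{d}\le f^{\CLARKE}(x;d)$ for every $d$, that $l\in\coupling f(x)$ precisely when $l(x+d)-l(x)\le f^{\RUBINOV}(x;d)$ for every $d$, and that, since each $l\in\coupling f(x)$ is a global minorant of $f$ anchored at $x$, one also has $f^{\RUBINOV}(x;d)\le f(x+d)-f(x)$ for every $d$. Hence it suffices to establish the pointwise estimate $f^{\CLARKE}(x;d)\le f^{\RUBINOV}(x;d)$: then for $\xi\in\clarke f(x)$, identifying $\xi$ with the linear function $y\mapsto\dprod{\xi}{y}\in\Lcal$, one gets $\dprod{\xi}{d}\le f^{\CLARKE}(x;d)\le f^{\RUBINOV}(x;d)\le f(x+d)-f(x)$ for all $d$, i.e.\ $\xi\in\coupling f(x)$.

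First I would prove the first bullet. Fix $l\in\coupling f(x)$; the defining inequality says that the surplus $g:=f-l$ attains a global minimum at $x$. Writing $f=g+l$ and using subadditivity of the $\limsup$ together with positive homogeneity in the direction of the upper Clarke derivative gives $f^{\CLARKE}(x;d)\le g^{\CLARKE}(x;d)+l^{\CLARKE}(x;d)$; as $l$ is regular enough that $l^{\CLARKE}(x;d)=l'(x;d)$, it remains to show $g^{\CLARKE}(x;d)\le 0$. This is where the hypothesis enters: from the sandwich $f^{\RUBINOV}(x;e)\le f(x+e)-f(x)=\bigl(g(x+e)-g(x)\bigr)+\bigl(l(x+e)-l(x)\bigr)$ and $g(x+e)\ge g(x)$, the surplus $g$ cannot display near $x$ the upward corner behaviour that would be incompatible with $f^{\RUBINOV}(x;\cdot)$ being positively subhomogeneous; made quantitative along $e=td$, dividing by $t$ and letting $t\to 0^{+}$ — using that positive subhomogeneity makes $t\mapsto f^{\RUBINOV}(x;td)/t$ nonincreasing — this yields $g^{\CLARKE}(x;d)\le 0$, hence $f^{\CLARKE}(x;d)\le l'(x;d)$.

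For the inclusion I would combine the first bullet with a second consequence of positive subhomogeneity: for any $l\in\coupling f(x)$ and $t>0$, $l(x+td)-l(x)\le f^{\RUBINOV}(x;td)\le t\,f^{\RUBINOV}(x;d)$, so dividing by $t$ and letting $t\to 0^{+}$ gives $l'(x;d)\le f^{\RUBINOV}(x;d)$. Chaining with the first bullet, $f^{\CLARKE}(x;d)\le l'(x;d)\le f^{\RUBINOV}(x;d)$ for every $d$, which is the pointwise estimate of the first paragraph, so $\clarke f(x)\subset\coupling f(x)$. (This uses that $\coupling f(x)$ is nonempty and that $\Lcal$ contains the relevant linear functions, which is part of the standing hypotheses on $\Lcal$.)

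The delicate step is the claim $g^{\CLARKE}(x;d)\le 0$ in the middle paragraph: a global minimum by itself does not bound the upper Clarke derivative (think of $|\cdot|$ at $0$), so the positive subhomogeneity of $f^{\RUBINOV}(x;\cdot)$ must be used in an essential way there, and one must also be careful with the regularity of $l$ so that $l^{\CLARKE}(x;\cdot)=l'(x;\cdot)$; the remaining manipulations are routine bookkeeping with the inequalities above.
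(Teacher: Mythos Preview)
Your route and the paper's route diverge at the crucial step. The paper establishes $f^{\CLARKE}(x;d)\le l'(x;d)$ by invoking \emph{Ekeland's variational principle}: writing $g(\cdot)=l(\cdot+td)-l(\cdot)$ and applying Ekeland to produce a nearby point $v$ where the quotient is controlled, then using $l\in\coupling f(x)$ along the segment from $v$ to $x$ (so that $f(v+td)-f(v)\le l(v+td)-l(v)$ when $x=v+td$). The subhomogeneity hypothesis is used only at the very end, to pass from $l'(x;d)$ to $f^{\RUBINOV}(x;d)$ and deduce the inclusion---exactly as in your third paragraph.

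Your substitute for Ekeland, namely the subadditivity split $f^{\CLARKE}\le g^{\CLARKE}+l^{\CLARKE}$ with $g=f-l$ followed by the claim $g^{\CLARKE}(x;d)\le 0$, has a genuine gap. Everything you write to justify $g^{\CLARKE}(x;d)\le 0$---the sandwich $f^{\RUBINOV}(x;e)\le f(x+e)-f(x)$, the global minimum of $g$ at $x$, the subhomogeneity bound $f^{\RUBINOV}(x;td)\le t\,f^{\RUBINOV}(x;d)$---lives on the single ray $\{x+td:t>0\}$, and therefore can at best control the ordinary directional derivative $g'(x;d)$. The Clarke derivative is a $\limsup$ over \emph{nearby base points} $y\to x$, and none of your inequalities say anything about $g(y+td)-g(y)$ for $y\neq x$. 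You yourself flag the $\lvert\cdot\rvert$ example; note that it also satisfies your subhomogeneity hypothesis: with $f(y)=\lvert y\rvert$, $x=0$, one has $f^{\RUBINOV}(0;d)=\lvert d\rvert$, positively homogeneous, yet the constant $l\equiv 0$ lies in $\coupling f(0)$ and gives $g=f$ with $g^{\CLARKE}(0;d)=\lvert d\rvert>0$. So subhomogeneity alone does not rescue the step; this is precisely why the paper reaches for Ekeland (and why the Remark following the proposition says one can dispense with Ekeland only by replacing $l'(x;d)$ with $l^{\CLARKE}(x;d)$). A secondary issue: your identity $l^{\CLARKE}(x;d)=l'(x;d)$ presumes Clarke regularity of $l$, which is not among the standing hypotheses on $\Lcal$.
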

\begin{proof}
Let $g(x)=l(x+td)-l(x)$,
we apply the Ekeland principle
for $g$, not identically 0 and  bounded by below, such that $g(x)\leq \inf g(x)+\epsilon$ for $\epsilon>0$,
to get
\begin{align*}
\exists v,  \lim_{t\to 0^+}\frac{g(x)}{t}&\geq \lim_{v\to x,t\to 0^+}\frac{g(v)}{t}\\
&\geq \limsup_{v\to x,t\to 0^+}\frac{f(v+td)-f(v)}{t}=f^\CLARKE(x;d)
\end{align*}
where we select the direction $d$ such that $x=v+td$ so that $f(v+td)-f(v)\leq l(v+td)-l(v)$ since $l\in\coupling f(x)$.
Then, by assumption  $f^\RUBINOV(x;d)\geq \lim_{t\to 0^+}\frac{g(x)}{t}$ for all $l\in\coupling f(x)$, hence we conclude 
$f^\RUBINOV(x;d)\geq f^\CLARKE(x;d)$ and the expected inclusion is proven.
\end{proof}

\begin{remark}
$\quad $
\begin{enumerate}
\item If $l'(x;d)$ is weakened to $l^\CLARKE(x;d)$ instead, then the result holds  without the Ekeland principle
and we retrieve the sublinear property that may fail for $l'(x;d)$ provided $f^\RUBINOV(x;d)\geq l^\CLARKE(x;d)$.
\item For locally Lipschitz function $f$, in the Frechet setting we have more directly
\begin{align*}
\clarke f(x)=\conv\bigl\{\lim\gradient f(x_k)\lvert x_k\to x, x_k\not\in \Omega_f, x_k\not\in S\bigr\}
\end{align*}
Let $l(x)=\sigma\norm{x-x_k}^2+\min\bigl\{ \dprod{\gradient f(x_k)}{x-x_k}\bigr\}$ 
where the minimum is over all linear functions $\dprod{\gradient f(x_k)}{x-x_k}$
with $\gradient f(x_k)\in \clarke f(x)$
then
$f(x)\leq f(x_k)+l(x)-l(x_k)$ for some $\sigma>0$
and $\clarke f(x)\subset\coupling f(x)$. 
\end{enumerate}
\end{remark}

\begin{proposition}\hylabel{prop:decouplingClarke}
Let us assume that $q(x,\cdot)\in\Qcal$ be lower semi-continuous and $f^\FT(x;d)$ be positive subhomogeneous, both in the second argument,
then
\begin{align*}
q\in\decoupling f(x) &\Rightarrow f^\CLARKE(x;\transdir(d))\leq q'_{x,0}(x;d)\\
\clarke f(x)&\subset\decoupling f(x)
\end{align*}
\end{proposition}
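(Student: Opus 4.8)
The plan is to transcribe the proof of Proposition~\ref{prop:couplingClarke} into the decoupled framework, with the difference $q(x,d)-q(x,0)$ playing the role of $l(x+d)-l(x)$ and the transported direction $\transdir(d)\in\Rset^n$ playing the role of $d$. Fix $d\in\Rset^p$ and $q\in\decoupling f(x)$. As in the coupled case one builds an auxiliary function out of $q$ (the difference quotient $y\mapsto q(y,d)-q(y,0)$, or its counterpart in the second variable): it is continuous in the first argument by the defining properties of $\Qcal$ and lower semicontinuous in the second by hypothesis, and after localization it is bounded below, so Ekeland's variational principle applies and produces a point $v$ arbitrarily near $x$ at which this function is almost minimal in the strong sense of the principle. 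Feeding the defining inequality $f(v+\transdir(\cdot))\geq f(v)+q(v,\cdot)-q(v,0)$ into the difference quotient along $\transdir(d)$, with the increment chosen (exactly as in the coupled argument) so that $v+t\transdir(d)$ returns to the base configuration, gives
\[
f^\CLARKE(x;\transdir(d))=\limsup_{v\to x,\,t\to 0^+}\frac{f(v+t\transdir(d))-f(v)}{t}\;\leq\;q'_{x,0}(x;d),
\]
which is the first assertion; the lower semicontinuity of $q(x,\cdot)$ is precisely what the Ekeland step requires in the decoupled setting, taking over the role of the continuity of $l$ in Proposition~\ref{prop:couplingClarke}.

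For the inclusion I would then close the loop exactly as in that proposition. Since $q\in\decoupling f(x)$ gives $q(x,td)-q(x,0)\leq f^\FT(x;\transdir(td))$ for every $t>0$, positive subhomogeneity of $f^\FT(x;\cdot)$ together with positive homogeneity of $\transdir$ yields $q(x,td)-q(x,0)\leq t\,f^\FT(x;\transdir(d))$; dividing by $t$ and letting $t\to 0^+$ gives $q'_{x,0}(x;d)\leq f^\FT(x;\transdir(d))$, and combining with the first assertion, $f^\CLARKE(x;\transdir(d))\leq f^\FT(x;\transdir(d))$ for all $d\in\Rset^p$. Now take $\xi\in\clarke f(x)$; the characterization recalled just before the proposition gives $\dprod{\xi}{\transdir(d)}=\dprod{\xi}{x+\transdir(d)}-\dprod{\xi}{x}\leq f^\CLARKE(x;\transdir(d))\leq f^\FT(x;\transdir(d))$ for every $d$, so the function $q_\xi(y,d)=\dprod{\xi}{\transdir(d)}$, which is independent of $y$ (hence trivially continuous in the first argument) and lower semicontinuous in the second because $\transdir$ is continuous, satisfies the membership criterion $q_\xi(x,d)-q_\xi(x,0)\leq f^\FT(x;\transdir(d))$ for all $d$; that is, $q_\xi\in\decoupling f(x)$. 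Identifying $\xi$ with $q_\xi$, just as one identifies $\xi$ with the linear map $l_\xi$ in the coupled statement, this reads $\clarke f(x)\subset\decoupling f(x)$.

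The main obstacle is the Ekeland step behind the first assertion. In contrast with the coupled case, the base point of the Clarke difference quotient is perturbed in $\Rset^n$ while the admissible increment is the image $\transdir(d)$ of a possibly non-injective, non-linear map, so care is needed both to pick the perturbed increment so that the inequality defining $\decoupling f(x)$ can be invoked in the direction that reverses it and to guarantee that the resulting quotient genuinely converges to $q'_{x,0}(x;d)$ and dominates the Clarke $\limsup$. I would also make explicit the standing hypotheses this argument silently uses, namely continuity and positive homogeneity of $\transdir$, and, should the one-sided limit defining $q'_{x,0}(x;d)$ fail to exist for a merely lower semicontinuous $q(x,\cdot)$, replace it by a Clarke-type upper derivative of $q(x,\cdot)$ at $0$, which is exactly the refinement already noted in the remark following Proposition~\ref{prop:couplingClarke}.
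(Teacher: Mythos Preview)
Your proposal is correct and follows essentially the same route as the paper: apply Ekeland's principle to the auxiliary function $h(y)=q(y,td)-q(y,0)$ to bound the Clarke directional derivative $f^\CLARKE(x;\transdir(d))$ by $q'_{x,0}(x;d)$, then invoke the positive subhomogeneity of $f^\FT(x;\cdot)$ to obtain $f^\CLARKE(x;\transdir(d))\leq f^\FT(x;d)$ and hence the inclusion. The paper's argument is terser---it does not spell out the embedding $\xi\mapsto q_\xi(y,d)=\dprod{\xi}{\transdir(d)}$ you provide for the inclusion, and it only alludes to ``mild assumptions on $\transdir$'' where you explicitly name continuity and positive homogeneity---so your write-up is, if anything, a cleaner rendering of the same idea, and your closing caveats about the Ekeland step and the possible need for a Clarke-type upper derivative of $q(x,\cdot)$ at $0$ match the paper's own remark following the proposition.
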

where $q'_{x,0}(x;d)= \lim_{t\to 0^+}\frac{q(x,td)-q(x,0)}{t}$ is a directional derivative at $0$, with respect to the second argument.
\begin{proof}
Let $h(x)=q(x,td)-q(x,0)$; we apply the Ekeland principle 
for $h$, not identically 0 and  bounded by below, such that $h(x)\leq \inf h(x)+\epsilon$ for $\epsilon>0$,
to get
\begin{align*}
\exists y,  \lim_{t\to 0^+}\frac{h(x)}{t}&\geq \lim_{y\to x,t\to 0^+}\frac{h(y)}{t}\\
&\geq \limsup_{y\to x,t\to 0^+}\frac{f(y+t\transdir(d))-f(y)}{t}=f^\CLARKE(x;\transdir(d))
\end{align*}
Using the assumption $f^\FT(x;td)\leq tf^\FT(x;d)$, we conclude $f^\FT(x;d)\geq f^\CLARKE(x;\transdir(d))$ and the expected inclusion is proven
under mild assumptions on $\transdir$.
\end{proof}

\begin{remark}
Like in the $\coupling f(x)$ case, the Ekeland principle could be skipped, provided
the subhomogeneity is replaced by a decoupled Clarke's derivative in the first argument at $x$ and 
a directional derivative in the second argument at $0$\\
$f^\FT(x;d)\geq \lim_{y\to x,t\to 0^+}\frac{q(y,td)-q(y,0)}{t}\simeq q^{\CLARKE~'}_{x,0}(x;d)$.
\end{remark}

There is no obvious composition rule for both subdifferentials unless the functions
in $\partial f(x)$ have a representation in $\Rset^n$.

\section{The sign change counting function}\hylabel{sec:signchange}
The number of sign transitions along the components may be seen as a first order
derivative of the sign counting function; its active index set is defined
accordingly
\begin{align*}
J(x)=&\bigl\{i\lvert \sign(x_i)=\sign(x_{i+1})\bigr\}\\
{J\para}(x)=&\bigl\{i,i+1\lvert i\in J(x))\bigr\},\quad {J\ortho}(x)=\bigl\{i\lvert i\not\in{J\para}(x)\bigr\}\\
X_{J\para}(x)=&\bigl\{y\in\Rset^n\lvert\sign(y_i)= \sign(x_i) \all i\in {J\para}(x)\bigr\}\\
\tfun(x)=&\card{J\ortho(x)}
\end{align*}
where $i+1$ is possibly taken modulo $n$ for circular counting (as required in some 
real life applications).
For instance, at $x=(-24 ,-30, 19,14, 0)^{\top}\in \Rset^5$ the sign change counting function output is $\tfun(-1,-1,1,1,0)=3$.

\begin{lemma}\hylabel{lem:changecomp}
Let assume $x_{n+1}=x_1, x_{n}$ according with circular case or not, and define $l_i(x;k)=(\sign(x_i)+\sign(x_{i+1})+k\sign(x_ix_{i+1}))(\sign(x_ix_{i+1})-1)$ for the parameter $k$
then $l_i(x;k)=0$ iff $i\in J(x)$ provided $k\neq 0$.
Moreover, $\abs{l_i(x;1)}$ measures the gap between the 2 consecutive components.
\end{lemma}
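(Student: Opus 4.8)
The plan is to collapse everything to a finite check on the pair of signs of the two consecutive components, exploiting the multiplicativity $\sign(\alpha\beta)=\sign(\alpha)\sign(\beta)$, which holds for all reals (in particular at $0$). Writing $a=\sign(x_i)$ and $b=\sign(x_{i+1})$, we get $\sign(x_ix_{i+1})=ab$, hence
\[
l_i(x;k)=(a+b+kab)(ab-1),\qquad a,b\in\{-1,0,1\},\ ab\in\{-1,0,1\}.
\]
The circular-versus-linear convention only governs which index is paired with which (and whether the pair $(n,1)$ is counted), so it is irrelevant to this pointwise identity. Recall $i\in J(x)\iff a=b$.

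Next I would split on $p:=ab$. If $p=1$, then $a=b\in\{-1,1\}$, so $i\in J(x)$, and the outer factor $(p-1)$ vanishes, giving $l_i(x;k)=0$ for every $k$. If $p=0$, at least one of $a,b$ is zero and $l_i(x;k)=(a+b)(0-1)=-(a+b)$; when both vanish, $a=b=0$ so $i\in J(x)$ and $l_i(x;k)=0$, while when exactly one vanishes, $a+b=\pm1$, so $i\notin J(x)$ and $l_i(x;k)=\mp1\neq0$. Finally, if $p=-1$, then $a$ and $b$ are opposite nonzero signs, so $a+b=0$ and $i\notin J(x)$; here $l_i(x;k)=(0-k)(-1-1)=2k$, which is nonzero \emph{precisely} because $k\neq0$. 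Combining the three cases yields $l_i(x;k)=0\iff i\in J(x)$, and also explains the hypothesis $k\neq0$: at a genuine sign reversal ($p=-1$) the value $k=0$ would spuriously return $0$. (For the converse direction one notes that $i\in J(x)$ forces $p=a^2\in\{0,1\}$, so only the first two cases occur and $l_i=0$.)

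For the \emph{moreover} claim I would specialise to $k=1$ and read the magnitude straight off the same case split: $\abs{l_i(x;1)}=0$ when $p=1$ or when $p=0$ with both components zero; $\abs{l_i(x;1)}=1$ when $p=0$ with exactly one component zero; and $\abs{l_i(x;1)}=2$ when $p=-1$. In all cases $\abs{l_i(x;1)}=\abs{\sign(x_i)-\sign(x_{i+1})}\in\{0,1,2\}$, i.e. it counts the number of sign levels separating the two consecutive components ($0$ for no change, $1$ for a change involving a zero, $2$ for a full reversal), which is the asserted gap.

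There is no real obstacle here beyond bookkeeping; the only point demanding care is to notice that in the case $p=1$ the outer factor $ab-1$ annihilates the product, and in the case $p=0$ the term $kab$ vanishes, so the parameter $k$ actually contributes to the value only in the case $p=-1$ — which is exactly why $k\neq0$ is both necessary and sufficient for the stated equivalence.
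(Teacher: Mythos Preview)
Your proof is correct and follows essentially the same approach as the paper: a finite case check on the pair $(\sign(x_i),\sign(x_{i+1}))\in\{-1,0,1\}^2$. Your organisation by the product $p=ab$ streamlines the paper's raw six-case enumeration into three groups, and you also spell out the ``moreover'' claim (identifying $\abs{l_i(x;1)}$ with $\abs{\sign(x_i)-\sign(x_{i+1})}$), which the paper leaves implicit in its table of values.
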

\begin{proof}
Observe that $l_i(x;k)=(k+2\sign(x_i))(1-1)=0$ whenever $x_{i+1}=x_i$ to deal with  non circular case
and no sign change,
then by enumeration of remaining cases,
{\tiny
\begin{align*}
l_i(x;k)=&\begin{cases}
(-1+0+0k)(0-1)=1\\
(-1+1-k)(-1-1)=-2k\\
(0-1-0k)(0-1)=1\\
(0+1+0k)(0-1)=-1\\
(1-1-k)(-1-1)=2k\\
(1+0+0k)(0-1)=-1
\end{cases}
\end{align*}
}
\end{proof}
As a direct consequence, since $l_i(0;k)=0$ for all $i$
\begin{corollary}\hylabel{cor:changecomp}
Let $l:\Rset^n\to \Rset^n$ be the map defined by 
$$l(x;k)=\begin{bmatrix}l_1(x;k)&\ldots&l_n(x;k)\end{bmatrix}^{\top}$$
then $l(0;k)=0$ and 
\begin{align*}
\cfun(l(x;0<\abs{k}<1/2))\leq \tfun(x)=\cfun(l(x;\pm 1/2))=\norm{l(x;\pm 1/2)}^2\leq \cfun(l(x;1/2<\abs{k}))
\end{align*}
\end{corollary}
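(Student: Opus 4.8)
The plan is to read everything off the six‑case table in \hyautoref{lem:changecomp}, after refining it so that it also records $l_i(x;k)^2$. Throughout fix $k\neq 0$ and, as in that lemma, read $i+1$ modulo $n$ in the circular case and set $x_{n+1}=x_n$ otherwise; in the non‑circular case this makes the last coordinate $l_n(x;k)=(2\sign(x_n)+k\sign(x_n^2))(\sign(x_n^2)-1)=0$ identically, so the bookkeeping is the same in both cases. The table splits the gap indices into three classes according to the pair $(\sign(x_i),\sign(x_{i+1}))$: the \emph{non-transitions} $i\in J(x)$, on which $l_i(x;k)=0$; the \emph{zero transitions}, where exactly one of the two signs is $0$, on which the table gives $l_i(x;k)=\pm 1$ and $l_i(x;k)^2=1$; and the \emph{reversals}, where $\{\sign(x_i),\sign(x_{i+1})\}=\{-1,1\}$, on which the table gives $l_i(x;k)=2k$ and $l_i(x;k)^2=4k^2$. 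Let $a(x)$ and $b(x)$ be the number of zero transitions and of reversals; these two classes together are exactly the sign transitions, i.e. the indices $i\notin J(x)$, so $a(x)+b(x)=\card{\{i\lvert i\notin J(x)\}}=\tfun(x)$.

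From this the occurrences of $\cfun(l(x;\cdot))$ in the chain follow at once: since $\pm1\neq0$ and $2k\neq0$ (this uses $k\neq0$), the nonzero coordinates of $l(x;k)$ are precisely the sign transitions, so $\cfun(l(x;k))=a(x)+b(x)=\tfun(x)$ for \emph{every} $k\neq0$; in particular $\cfun(l(x;0<\abs{k}<1/2))=\tfun(x)=\cfun(l(x;\pm 1/2))=\cfun(l(x;1/2<\abs{k}))$, so the two written inequalities hold, trivially with equality, and $l(0;k)=0$ is the already‑noted $l_i(0;k)=(0+0+0)(0-1)=0$.

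For the middle equality $\cfun(l(x;\pm 1/2))=\norm{l(x;\pm 1/2)}^2$, observe that at $k=\pm 1/2$ a zero transition still gives $l_i=\pm1$ and a reversal gives $l_i=2k=\pm1$, so every coordinate of $l(x;\pm 1/2)$ lies in $\{-1,0,1\}$; hence $l(x;\pm 1/2)=\sign(l(x;\pm 1/2))$ and the identity $\cfun(y)=\norm{\sign(y)}^2$ from \hyautoref{lem:signminor} yields $\cfun(l(x;\pm 1/2))=\norm{l(x;\pm 1/2)}^2$, which also equals $a(x)+b(x)=\tfun(x)$. If one reads the two extreme terms of the chain as $\norm{l(x;k)}^2$ (the form under which those inequalities carry content), then $\norm{l(x;k)}^2=\sum_i l_i(x;k)^2=a(x)+4k^2\,b(x)$ with $a(x),b(x)\geq0$, which is nondecreasing in $\abs{k}$ and equals $\tfun(x)$ at $\abs{k}=1/2$; this gives $\norm{l(x;0<\abs{k}<1/2)}^2\leq\tfun(x)\leq\norm{l(x;1/2<\abs{k})}^2$, and chaining the pieces finishes the argument. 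There is essentially no obstacle beyond this bookkeeping: the only things to watch are the circular/non‑circular convention (handled by the vanishing padding coordinate $l_n$) and the notational point that, for $k\neq0$, $\cfun(l(x;k))$ is constant equal to $\tfun(x)$, so the real content of the corollary is the quadratic identity at $k=\pm 1/2$ together with the $\abs{k}$‑monotonicity of $\norm{l(x;k)}^2$.
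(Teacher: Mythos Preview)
Your argument is correct and follows the paper's own (very terse) reasoning: both read the result directly off the six-case table of \hyautoref{lem:changecomp}, using that the nonzero coordinates of $l(x;k)$ are exactly the sign transitions and that these coordinates have absolute value $1$ precisely at $\abs{k}=1/2$. You go further than the paper by making explicit that $\cfun(l(x;k))=\tfun(x)$ for \emph{every} $k\neq 0$ (so the outer inequalities, as literally written with $\cfun$, are equalities) and that the genuine content is the $\abs{k}$-monotonicity of $\norm{l(x;k)}^2=a(x)+4k^2 b(x)$---which is exactly the form in which the corollary is invoked downstream in \hyautoref{thm:changecountdiff}.
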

since the absolute value emphasizes a sign change for $k=\pm 1/2$, while
it is respectively minorized and majorized otherwise.
Notice that the parameter is only involved in $\pm 1$ sign transitions
and $k=\pm 1$ amplifies the sign change count for such transitions by
a factor 2,  therfeore $\abs{k}=1$ play the role of an upper bound somehow.
A smoothed minorant version directly follows the smoothed version of the sign
counting function
\begin{align*}
\begin{cases}
\tfun(x)\geq\tfun_\epsilon(x)=\cfun_\epsilon(l(x;\pm\Half))\\
\lim_{\epsilon\to 0}\tfun_\epsilon(x)=\tfun(x).
\end{cases}
\end{align*}

For $n=2$ the function $\tilde l_1: \Rset^2\to\Rset$ defined by
$$\tilde l_1(x;\pm 1/2)=(x_1+x_2\pm x_1x_2/2)(x_1x_2-1)$$ one can calculate the hessian
{\small
\begin{align*}
\hessian \tilde l_1(x;\pm 1/2)=\begin{bmatrix}
x_2(2\pm x_2)&2(\mp 1/4+x_1+x_2\pm x_1 x_2)\\
2(\mp 1/4+x_1+x_2\pm x_1 x_2)&x_1(2\pm x_1) \end{bmatrix}
\end{align*}
}
with eigenvalues at $sgn(x)$ points, with opposite signs (which means $\tilde l_1$ neither convex nor concave)
\begin{align*}
2\Lambda\begin{bmatrix}-1&-1\end{bmatrix}^t=\mp 2\pm 5\\
2\Lambda\begin{bmatrix} 0& 0\end{bmatrix}^t=\pm 1\\
2\Lambda\begin{bmatrix}+1&+1\end{bmatrix}^t=\pm 6\mp 11\\ 
2\Lambda\begin{bmatrix}-1&0\end{bmatrix}^t=2\Lambda\begin{bmatrix}0&-1\end{bmatrix}^t=\mp 1\pm \sqrt{26}\\
2\Lambda\begin{bmatrix}-1&+1\end{bmatrix}^t=2\Lambda\begin{bmatrix}+1&-1\end{bmatrix}^t=\pm 2\pm \sqrt{41}\\
2\Lambda\begin{bmatrix} 0&+1\end{bmatrix}^t=2\Lambda\begin{bmatrix}+1& 0\end{bmatrix}^t=\pm 3\pm 3\sqrt{2}
\end{align*}
so that for all $i$, $\tilde l_i(x;\pm 1/2)$ is twice differentiable at $\sign(x)$.

Under a slight abuse of notation, where  $\norm{l(x;\pm 1/2)}^2$ stands for $l(x)$,
and  under the special transform $\transdir=\identity$ we  prove our main result of the section.
\begin{theorem}\hylabel{thm:changecountdiff}
\begin{align*}
\norm{l(x;\pm 1/2)}^2&\in\coupling \tfun(x)\\
q(x,y;k_x,k_y)-q(x,0;k_x,k_y)
&=\norm{l(x+y;0<\abs{k_y}\leq 1/2)}^2-\norm{l(x;1/2\leq\abs{k_x})}^2\\
q(x,y;k_x,k_y)&\in\decoupling \tfun(x)
\end{align*}
\end{theorem}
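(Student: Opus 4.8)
The plan is to prove the two memberships separately, each of which comes down to the elementary two‑sided estimate already packaged in Corollary~\hyref{cor:changecomp}. First I would record the identification underlying the ``abuse of notation'': by Lemma~\hyref{lem:changecomp} together with Lemma~\hyref{lem:signminor}, every component $l_i(z;\pm\Half)$ lies in $\{-1,0,1\}$ and vanishes exactly when $i\in J(z)$, so the real‑valued map $z\mapsto\norm{l(z;\pm\Half)}^2$ coincides with $\tfun$ itself, which is precisely the middle equality of Corollary~\hyref{cor:changecomp}. (Strictly $\tfun$ is only lower semicontinuous, so if one insists on the continuity built into Definition~\hyref{def:qdiff} one replaces it throughout by the smoothed minorant $\tfun_\epsilon$; I suppress this, since it changes none of the inequalities below.) Granting the identification, the defining inequality of $\coupling\tfun(x)$ for the candidate function $l(\cdot)=\norm{l(\cdot;\pm\Half)}^2$ reads $\tfun(x+d)\ge\tfun(x)+\tfun(x+d)-\tfun(x)$, which holds, with equality, for every $d\in\Rset^n$; hence $\norm{l(x;\pm\Half)}^2\in\coupling\tfun(x)$.

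For the $\FT$-subdifferential I would next make explicit the monotonicity in the parameter that is implicit in Corollary~\hyref{cor:changecomp}. For $i\notin J(z)$ the case enumeration of Lemma~\hyref{lem:changecomp} shows that $l_i(z;k)^2$ equals $1$ when the transition at $i$ involves a zero component and equals $4k^2$ when it is a genuine change of sign; summing over $i$ and using that the number of indices with $l_i(z;k)\ne 0$ is $\tfun(z)$ gives
\begin{align*}
\norm{l(z;k)}^2\le\tfun(z)\ \text{ for }\ 0<\abs{k}\le\Half,\qquad\qquad \norm{l(z;k)}^2\ge\tfun(z)\ \text{ for }\ \abs{k}\ge\Half .
\end{align*}
Applying the left estimate at the probed point $z=x+y$ with the ``measuring'' parameter $k_y$ (so $0<\abs{k_y}\le\Half$) and the right estimate at the base point $z=x$ with the parameter $k_x$ (so $\abs{k_x}\ge\Half$), one obtains, for every $y\in\Rset^n$,
\begin{align*}
q(x,y;k_x,k_y)-q(x,0;k_x,k_y)=\norm{l(x+y;k_y)}^2-\norm{l(x;k_x)}^2\le\tfun(x+y)-\tfun(x),
\end{align*}
where $q(x,\cdot\,;k_x,k_y)$ is the function defined piecewise so that its value at the second argument $0$ is $\norm{l(x;k_x)}^2$. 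Rearranged, this says $\tfun(x+y)\ge\tfun(x)+q(x,y;k_x,k_y)-q(x,0;k_x,k_y)$ for all $y$, which with the special transform $\transdir=\identity$ is exactly the membership $q(x,y;k_x,k_y)\in\decoupling\tfun(x)$ of Definition~\hyref{def:qdiff}; that $q$ is real valued is clear, and its continuity in the first argument is covered by the same smoothing remark as above.

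I do not expect a genuine obstacle here: the statement is essentially a reformulation of the sandwich of Corollary~\hyref{cor:changecomp} in the language of the $\RUBINOV$- and $\FT$-subdifferentials, so the only real care is in unwinding the definitions — keeping $\transdir=\identity$ in mind, reading $q(x,d)-q(x,0)$ with $q$ defined piecewise at $d=0$, and respecting the asymmetric roles of the two resolutions, the minorant $\abs{k_y}\le\Half$ at $x+y$ against the majorant $\abs{k_x}\ge\Half$ at $x$. The one point that uses the structure of this particular function is the two‑sided bound on $\norm{l(z;k)}^2$, and that is already the finite case analysis of Lemma~\hyref{lem:changecomp}; the convexity of $\decoupling\tfun(x)$ needed to treat it as a bona fide subdifferential is the general fact established just after Definition~\hyref{def:qdiff}.
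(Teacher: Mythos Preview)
Your proposal is correct and follows essentially the same route as the paper: both parts reduce to the two–sided estimate of Corollary~\hyref{cor:changecomp}, with the $\RUBINOV$-case being the trivial equality $\tfun=\norm{l(\cdot;\pm\Half)}^2$ and the $\FT$-case being the sandwich $\norm{l(x+y;k_y)}^2\le\tfun(x+y)$, $\norm{l(x;k_x)}^2\ge\tfun(x)$. The only cosmetic difference is that you make the piecewise definition of $q$ at $y=0$ explicit, whereas the paper instead verifies $\hat q(x,0;k_x,k_y)=\norm{l(x;k_y)}^2-\norm{l(x;k_x)}^2=4(k_y^2-k_x^2)\cdot\#\{\pm1\text{ transitions}\}\le 0$ and concludes that ``$q$ is implicitly known''; these are two ways of resolving the same discontinuity of the right–hand side at $y=0$.
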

\begin{proof}
$\tfun(x)=\tfun(y)-\norm{l(y,\pm 1/2)}^2+\norm{l(x;\pm 1/2)}^2$ leads to the
  coupled case $\coupling \tfun(x)$;
however, it does not help much as a direction for minimizing
the number of sign changes, as a neutral subgradient.
The decoupled case follows 
by applying \hyautoref{cor:changecomp},
$ \tfun(y)-\tfun(x)\geq 
\norm{l(y;0<\abs{k_y}\leq 1/2)}^2
-\norm{l(x;1/2\leq\abs{k_x})}^2$.

Notice that 
\begin{align*}
&\hat q(x,0;0<\abs{k_y}\leq 1/2\leq\abs{k_x})=\norm{l(x;0<\abs{k_y}\leq 1/2)}^2-\norm{l(x;1/2\leq\abs{k_x})}^2\\
&\quad= (k_y-k_x)\sum
(x_ix_{i+1}-1)^2\bigl((k_y+k_x)x_i^2x_{i+1}^2+2x_ix_{i+1}(x_i+x_{i+1})\bigr)
<0\end{align*}
where for sake of brevity, we write $x_i$ instead
of $\sign(x_i)$; clearly,
each summand accounts for $0$ if either one component is $0$, or both 
components have same sign, and for $4 (k_y^2-k_x^2)<0$ otherwise.
Therefore 
\begin{align*}
\tfun(y)-\tfun(x)\geq& \hat q(x,y-x;0<\abs{k_y}\leq 1/2\leq\abs{k_x})\\
\geq& \hat q(x,y-x;0<\abs{k_y}\leq 1/2\leq\abs{k_x})
+\hat q(x,0;0<\abs{k_y}\leq 1/2\leq\abs{k_x})
\end{align*}
showing that $q$ is implicitly known.
However, it improves over $\coupling \tfun(x)$ with the parameters $k_y, k_x$ varying.

\end{proof}

\begin{remark}
In a domain $D$ in the vicinity of $x$, parameters $k_x$ and $k_y$
may vary such that $q(x,d;k_x,k_y)\in\decouplingcone(x;D)$.
It would be interesting to check whether a similar situation
arises for $\couplingcone(x;D)$ for a single parameter $k$ varying.
\end{remark}

\begin{remark}
Let $I$ (resp. $Z$) be the identity (resp. circular shift) matrix conformably to the vector size
$x$ and $\hadamard$ the componentwise (Hadamard) product;
the $i-$th power composition of $\hadamard$ is denoted by a self
explanatory exponent $()^{\hadamard i}$. In this algebraic writing,
the relationship $\norm{x\hadamard y}^2=\dprod{x^{\hadamard
    2}}{y^{\hadamard 2}}$ gives
\begin{align*}
\norm{l(x;k)}^2=
\dprod{\bigl((I+Z)x+k(x\hadamard Zx)\bigr)^{\hadamard  2}}{\bigl((x\hadamard Zx)-e\bigr)^{\hadamard 2}}
\end{align*}
with the all $1$s vector $e$.
\end{remark}

\begin{lemma}\hylabel{lem:changecountfrechet}
For all $x\not\in\arg\local\min t$ and $x\neq 0$ 
\begin{align*}
\begin{cases}
\frechet \tfun(x)\subset X\ortho(x), x\in\arg\local\max t\\
\frechet \tfun(x)=0, \otherwise
\end{cases}
\end{align*}
\end{lemma}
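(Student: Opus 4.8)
The plan is to check both cases straight from the definition of $\frechet\tfun$, leaning on only two elementary features of $\tfun$: it is integer valued (being a count of transitions), and it depends on $x$ solely through $\sign(x)$, so that $\tfun(\lambda x)=\tfun(x)$ for every $\lambda>0$. As a preliminary I would observe that (i) the hypotheses $x\neq 0$ and $x\notin\arg\local\min\tfun$ already force $x$ to have at least one vanishing coordinate (otherwise $\sign(\cdot)$, hence $\tfun$, is constant on a neighbourhood of $x$ and $x$ is a local minimiser), and (ii) negating the definition of a local minimiser, in every ball $B(x,\radius)$ there is a point $y$ with $\tfun(y)<\tfun(x)$, hence $\tfun(y)\le\tfun(x)-1$ by integrality; so I would fix once and for all a sequence $y_k\to x$, $y_k\neq x$, with $\tfun(y_k)\le\tfun(x)-1$.

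For the branch $x\notin\arg\local\max\tfun$ I would show that $\frechet\tfun(x)$ contains no element at all, i.e. $\frechet\tfun(x)=\emptyset$ (the ``$=0$'' of the statement). For an arbitrary $x^{\star}\in\Rset^n$, Cauchy--Schwarz along $y_k$ gives
\begin{align*}
\frac{\tfun(y_k)-\tfun(x)-\dprod{x^{\star}}{y_k-x}}{\norm{y_k-x}}\le\frac{-1+\norm{x^{\star}}\,\norm{y_k-x}}{\norm{y_k-x}}=\norm{x^{\star}}-\frac{1}{\norm{y_k-x}},
\end{align*}
whose right-hand side tends to $-\infty$ as $k\to\infty$ because $\norm{y_k-x}\to 0$; so the $\liminf$ in the definition of $\frechet\tfun(x)$ equals $-\infty<0$ and $x^{\star}\notin\frechet\tfun(x)$. (This argument uses only that $x$ is not a local minimiser, so the same emptiness in fact persists in the remaining branch as well; the inclusion recorded there is merely the more robust bound I obtain next.)

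For the branch $x\in\arg\local\max\tfun$ I would take $x^{\star}\in\frechet\tfun(x)$ and test the defining $\liminf$ on the radial curve $y=(1+\tau)x$, $\tau\in(-1,1)$. Each such $y$ has the same coordinatewise sign pattern as $x$ (nonzero coordinates keep their sign, zero coordinates stay zero), so $\tfun((1+\tau)x)=\tfun(x)$ and $y\neq x$ for $\tau\neq 0$; restricting the $\liminf$ to these points,
\begin{align*}
\liminf_{\tau\to 0}\frac{-\tau\dprod{x^{\star}}{x}}{\abs{\tau}\,\norm{x}}&=\liminf_{\tau\to 0}\frac{\tfun((1+\tau)x)-\tfun(x)-\dprod{x^{\star}}{\tau x}}{\norm{\tau x}}\\
&\ge\liminf_{y\to x,\,y\neq x}\frac{\tfun(y)-\tfun(x)-\dprod{x^{\star}}{y-x}}{\norm{y-x}}\ge 0.
\end{align*}
Taking the one-sided limits $\tau\to 0^{+}$ and $\tau\to 0^{-}$ separately then forces $\dprod{x^{\star}}{x}\le 0$ and $\dprod{x^{\star}}{x}\ge 0$, hence $\dprod{x^{\star}}{x}=0$, i.e. $x^{\star}\in X\ortho(x)$, which is the claimed inclusion.

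I expect the only real difficulty to be interpretational rather than technical. One has to keep in mind that $\tfun$ is \emph{not} lower semicontinuous --- perturbing a zero coordinate strictly lowers the count --- so the familiar shortcut ``integer valued and lower semicontinuous $\Rightarrow$ every point is a local minimum'' is unavailable, and this is exactly what makes the hypothesis $x\notin\arg\local\min\tfun$ non-vacuous and ``$\frechet\tfun(x)=0$'' a genuine (emptiness) assertion rather than a triviality. Granting this, both steps are routine; the single structural input doing the work is the scaling identity $\tfun(\lambda x)=\tfun(x)$ for $\lambda>0$, which simultaneously identifies the local minimisers (the fully nonzero points, together with $0$) and confines any Frechet subgradient at a nonzero $x$ to $X\ortho(x)$.
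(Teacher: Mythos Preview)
Your argument is correct and takes a genuinely different route from the paper. The paper proceeds by testing the Fr\'echet quotient along specific families of directions: it considers $y$ for which $\tfun(x+\tau y)=\tfun(x)$ (the sign pattern is preserved), uses the reflection $y\mapsto -y$ to force $\dprod{x^\star}{y}=0$, and then flips a single coordinate $y_j\mapsto -y_j$ to pin down $x^\star_j=0$ on the support of $x$; its three bullets (fully nonzero $x$, $x=0$, local maximum) do not cleanly cover the ``otherwise'' branch of the statement. You instead exploit two structural features of $\tfun$ directly. Integrality together with the failure of local minimality supplies a sequence $y_k\to x$ with $\tfun(y_k)\le\tfun(x)-1$, which drives the Fr\'echet quotient to $-\infty$ and forces $\frechet\tfun(x)=\emptyset$ outright---this already settles both branches. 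The positive-homogeneity identity $\tfun(\lambda x)=\tfun(x)$ for $\lambda>0$, tested along the radial line $(1+\tau)x$, independently confines any subgradient to $X\ortho(x)$ without invoking the local-maximum hypothesis at all. Your version is shorter, handles both cases uniformly, and makes explicit that under the stated hypotheses the subdifferential is in fact empty (so the local-max inclusion is vacuous); the paper's coordinate-flip tests, on the other hand, give the finer containment $\frechet\tfun(x)\subset X_{I\ortho}(x)$, which your radial test alone would not recover. Your reading of ``$=0$'' as emptiness is the right one here and worth flagging, since the paper's first bullet uses the same notation for the singleton $\{0\}$ in the locally constant (hence excluded) case.
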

\begin{proof}
Frechet differential is computed, in the same way as for the sign function
for simple cases
\begin{itemize}
\item[$\bullet$] no $0$ component in the vicinity of $x$.
For all $x+\tau y\in B(x,\radius)$, $\tfun(x+\tau y)=\tfun(x)$,  hence 
$\dprod{x^\star}{y}\leq 0$ for all $x^\star\in\frechet \tfun(x)$.
Then, suppose $\dprod{x^\star}{y}< 0$ and consider $\bar y=-y$ to arrive at the contradiction
$\dprod{x^\star}{\bar y}> 0$ for $x^\star\in\frechet \tfun(x)$.
Finally, suppose $x_j^\star\neq 0$ for some $j$ and
$\sum x_i^\star y_i=0$, since there is no sign change in $B(x,\radius)$
taking $\bar y=y$ except for taking the opposite of $y_j\neq 0$, we get 
$\sum x_i^\star \bar y_i=-2 x_j^\star y_j\neq 0$.
It yields $\frechet \tfun(x)=0$ and we consider $x$ to be stationary.
\item[$\bullet$] $X\ortho(0)=\Rset^n\supset\frechet \tfun(0)$.
\item[$\bullet$] $x\neq 0$ is a local maximum.
For all $x+\tau y\in B(x,\radius)$, $\tfun(x+\tau y)\leq \tfun(x)$,  
and we get $\dprod{x^\star}{y}=0$, provided $x^\star\neq 0$ to derive
both previous contradictions. Therefore $\frechet \tfun(x)\subset X\ortho(x)$.
\end{itemize}
\end{proof}
If $0\neq x\in\arg\local\min t$ then  $\frechet \tfun(x)$ raises combinatorial
enumeration depending on the number of $\pm 1$ signs; this issue does not arise
for $\decoupling \tfun(x)$.

As an example, consider the $\tdiff{\FT} \tfun(x)$ at
a given transition point $x$ and direction $d$
\begin{align*}
q'(x;d)=-310+692 k-888 k^2
\begin{cases}
x=\begin{bmatrix}
-1& 1& 1& 0& -1& 0& 0\end{bmatrix}^t\\
d=\begin{bmatrix}
0& 74& 75& 0& -40& -50& 0\end{bmatrix}^t
\end{cases}
\end{align*}
for the varying parameter $0<k\leq 1/2$,  since the parameter $1/2\leq k_x$ does not play any role
in this instance
\hyautoref{fig:qder}.
The quadratic term justifies the naming of subdifferential.

 \begin{figure}[h]\hylabel{fig:sc7}
\begin{center}
\scalebox{0.2}{\includegraphix{sc7}}
\caption{example of $\decoupling \tfun(x)$ for varying parameter $0<k\leq 1/2$}\hylabel{fig:qder}
\end{center}
\end{figure}

\subsection{Example of the continuous multipliers}\hylabel{sec:example1}

Let us remind, to be self contained, the sufficient condition for global minimizer
established by the authors under the identity map $\transdir=\identity : \Rset^n\to \Rset^n$
\begin{proposition}[\cite{ft2013}]\hylabel{prop:CS1}
Let $Q$ be a set of real valued functions  such that $-q\in Q$ for
$q\in Q$;
if there are continuous multipliers $\lambda_i(x)\geq 0$ for all $x \in \feasible$ and
a feasible $z \in \feasible$ fulfilling 
\begin{align}
&0\in\decoupling f(z)+\decoupling\biggl(\sum_{i\in M} \lambda_i g_i\biggr)(z),\hylabel{eq:lag1}\\ 
&\biggl(\sum_{i\in M} \lambda_i g_i\biggr)(z)=0\hylabel{eq:compl}
\end{align}
then $z$ is a global minimizer for $\min f(x)$ under $g_i(x)\leq 0$.
\end{proposition}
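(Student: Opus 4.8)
The plan is to reduce the statement to the defining inequality of the $\FT$-subdifferential together with the complementarity condition \eqref{eq:compl}. First I would unwind \eqref{eq:lag1}: since $0$ lies in the (convex) set $\decoupling f(z)+\decoupling(\sum_{i\in M}\lambda_i g_i)(z)$, there exist $q\in\decoupling f(z)$ and $r\in\decoupling(\sum_{i\in M}\lambda_i g_i)(z)$ with $q+r=0$ as functions of $(z,\cdot)$. Applying \hyautoref{def:qdiff} to each, with $\transdir=\identity$, gives for every $d\in\Rset^n$
\begin{align*}
f(z+d)&\geq f(z)+q(z,d)-q(z,0),\\
\Bigl(\sum_{i\in M}\lambda_i g_i\Bigr)(z+d)&\geq \Bigl(\sum_{i\in M}\lambda_i g_i\Bigr)(z)+r(z,d)-r(z,0).
\end{align*}

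Next I would add these two inequalities and use $q=-r$ so that the $d$-dependent terms cancel: $q(z,d)-q(z,0)+r(z,d)-r(z,0)=0$. This yields, for every $d\in\Rset^n$,
\begin{align*}
f(z+d)+\Bigl(\sum_{i\in M}\lambda_i g_i\Bigr)(z+d)\geq f(z)+\Bigl(\sum_{i\in M}\lambda_i g_i\Bigr)(z).
\end{align*}
Now invoke the complementarity condition \eqref{eq:compl}, which makes the second term on the right vanish, so the right side is exactly $f(z)$. Writing an arbitrary feasible $x\in\feasible$ as $x=z+d$ with $d=x-z$, I would then argue that the left side is at most $f(x)$: indeed $g_i(x)\leq 0$ for each $i\in M$ and $\lambda_i(x)\geq 0$, hence $\sum_{i\in M}\lambda_i(x) g_i(x)\leq 0$ and therefore $f(x)\geq f(x)+\sum_{i\in M}\lambda_i(x) g_i(x)$, which is the left side evaluated at $z+d=x$. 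Chaining the inequalities gives $f(x)\geq f(z)$ for all feasible $x$, i.e. $z$ is a global minimizer.

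The one delicate point — and the place I expect the argument to need care — is the bookkeeping of where the multipliers are evaluated. The functions $q$ and $r$ in \eqref{eq:lag1} are subgradients of $f$ and of the single function $\sum_{i\in M}\lambda_i g_i$ at the fixed point $z$, so the inequality they produce is genuinely in terms of the values $(\sum_{i\in M}\lambda_i g_i)(z+d)$ with the multipliers $\lambda_i$ viewed as the continuous functions $\lambda_i(\cdot)$ appearing inside that composite function; the step that controls the left side at the feasible comparison point $x=z+d$ uses exactly the sign information $\lambda_i(x)\geq 0$, $g_i(x)\leq 0$, which holds by hypothesis for every $x\in\feasible$. The hypothesis that $-q\in Q$ whenever $q\in Q$ is what guarantees that the decomposition $0=q+r$ with $r=-q$ is admissible within the allowed class $Q$, so no generality is lost. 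Everything else is the convexity of $\decoupling f(z)$ (already established in \hyautoref{sec:gensubdiff}) and the elementary addition of inequalities.
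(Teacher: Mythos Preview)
The paper does not actually prove \hyautoref{prop:CS1}: it is recalled from \cite{ft2013} (``Let us remind, to be self contained, the sufficient condition for global minimizer established by the authors\ldots''), so there is no in-paper proof to compare against. Your argument is the natural Lagrangian-type proof one would expect for such a statement, and it is correct: unpack $0\in\decoupling f(z)+\decoupling(\sum\lambda_ig_i)(z)$ as $q+r=0$, add the two defining inequalities from \hyautoref{def:qdiff}, cancel, use complementarity \eqref{eq:compl} on the right and feasibility $\lambda_i(x)g_i(x)\le 0$ on the left.

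Two minor remarks on your commentary. First, the convexity of $\decoupling f(z)$ plays no role here; the existence of $q$ and $r$ with $q+r=0$ is immediate from the Minkowski-sum reading of \eqref{eq:lag1}, not from any convex-combination argument. Second, your explanation of why the hypothesis ``$-q\in Q$ whenever $q\in Q$'' is needed is a bit off: once \eqref{eq:lag1} is assumed, $q$ and $r=-q$ are \emph{already} given as members of the two subdifferentials (hence of $Q$), so the symmetry of $Q$ is not what licenses the decomposition in your proof. That hypothesis is rather a standing structural assumption on $Q$ ensuring that the relation $0\in\decoupling f(z)+\decoupling g(z)$ is not vacuous; it is harmless to mention but you should not present it as a step your argument relies on.
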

We considered in \cite{ft2013} the following example from \cite{MR1449790}.
\begin{align*}
\min &~ x^2\cos(2x)\\
\st&\begin{cases}
-x-2\pi \leq 0\\
 x-2\pi \leq 0
\end{cases}
\end{align*}
Choose multipliers 
$\lambda_1(x)=\abs{x-c_1}\exp{\abs{x-c_1}}$, $\lambda_2(x)=0$
and
$$q_f(x,y;\sigma)=(x+y)^2\cos(2(x+y))-y^2\cos(2y)-\sigma \abs{y}.$$

We get for $c_1=-4.8$, $\sigma=1$ the system of
dependent constraints 
\begin{align*}&\begin{cases}
-\abs{x+4.8}\exp(\abs{x+4.8})(x+2\pi)+x^2\cos(2x)+22.687\geq 0\\
-\abs{x+4.8}\exp(\abs{x+4.8})(x+2\pi)-x^2\cos(2x)-22.687+\sigma\abs{x+4.8}\geq 0\\
2x^2\cos(2x)+45.374-\sigma\abs{x+4.8}\geq 0
\end{cases}\end{align*}
which is fulfilled on the interval $[-2\pi,0]$ (see \hyautoref{fig:fbcond3_4_8}), hence the global
minimizer at $z=-4.8$.
\begin{figure}[ht]
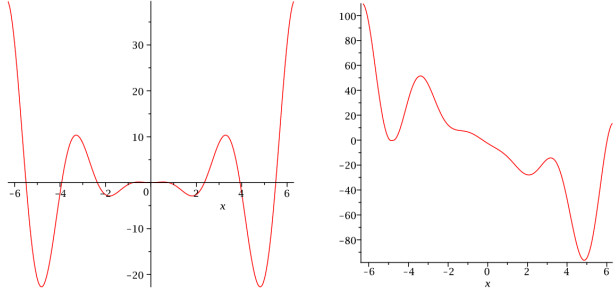

  \begin{center}
\scalebox{0.2}{\includegraphix{fb}}\quad\scalebox{0.2}{\includegraphix{fb_cond3_4_8}}
\end{center}
\caption{global minimizer at $z=-4.8$: on the left $f$ ; on the right $\lambda_1$
  multiplier with $\sigma=1$}
 \hylabel{fig:fbcond3_4_8}
\end{figure}

\subsection{Example showing some optimality condition checking difficulties}
Splitting equality constraints $g_i(x)=x_i(1-x_i^2)=0$ which capture sign
values $\{0,\pm 1\}$, into $g_i(x)\leq 0$ and
$-g_i(x)\leq 0$, then  
sufficient optimality condition \hyautoref{prop:CS1}
for $z$ to be a
global minimizer of $t$ simplifies by merging both non negative
continuous multipliers into a single one without the non negative condition
\begin{align}
&0\in\decoupling t(z)+\decoupling\biggl(\sum_{i\in M} \lambda_i
  g_i\biggr)(z),\hylabel{eq:lag1}
\end{align}

\begin{align}
\norm{l(z;1/2)}^2-\norm{l(d;k)}^2&\in\decoupling t(z)\\
\dprod{\gradient(\lambda_i g_i)(z)}{d}&=\dprod{\lambda\hadamard(e-3 z^{\hadamard  2})}{d}
\in\decoupling\biggl(\sum_{i\in  M} \lambda_i g_i\biggr)(z)\hylabel{eq:failure}
\end{align}
since for all $i,j$ we have $z_j(1-z_j)^2\diff \lambda_i/ \partial x_i=0$ at $z$.
\subsubsection{trivial examples upto dimension 3}\hylabel{sec:sc3}
For the, obviously optimal, solution,
$z=[-1,1]$, it leads, under the circular case, to the equation: 
\begin{align*}
\lambda_1 d_1(1-3z_1^2)+\lambda_2
d_2(1-3z_2^2)+2&-\bigl((d_1+d_2+k_{12}d_1d_2)^2\\
&\quad+(d_1+d_2+k_{21}d_1d_2)^2\bigr)(d_1d_2-1)^2=0
\end{align*}
w.l.o.g. we may select $k_{21}=k_{12}$ and polar coordinates $d_1=\cos\phi_1$, $d_2=\sin\phi_1$.
There exists $\lambda_1=0$ and $k_{12}=0$ such that
\begin{align*}
4\lambda_2\sin\phi_1=&4-(\cos\phi_1+\sin\phi_1)^2(\sin(2\phi_1)-2)^2
\end{align*}
Symmetry associated with circular case allows us to reduce $d_2$ to non negative values, equivalently
$\phi_1\in[0,\pi]$, then the sufficient condition is fulfilled as could be
checked on \hyautoref{fig:sc2}.
\begin{figure}[h]
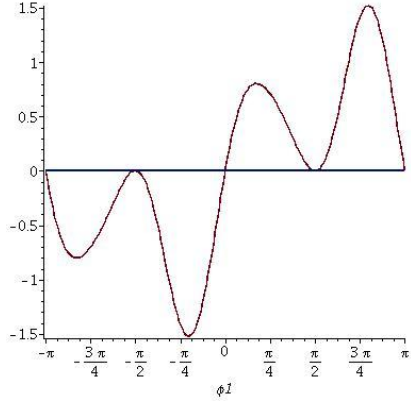

\begin{center}
\scalebox{0.4}{\includegraphix{sc2}}
\caption{sufficient condition at $z=[-1,1]$ 
  $\lambda_2(\phi_1)$ under  $\lambda_1=0$ }\hylabel{fig:sc2}
\end{center}
\end{figure}

For $z=[1,-1,1]$, with $2$ sign changes, it leads to the equation: 
\begin{align*}
0=&\lambda_1 d_1(1-3z_1^2)+\lambda_2 d_2(1-3z_2^2)+\lambda_3 d_3(1-3z_3^2)+2
-(d_1+d_2+k_{12}d_1d_2)^2(d_1d_2-1)^2\\
&\quad-(d_2+d_3+k_{23}d_2d_3)^2(d_2d_3-1)^2
-(d_3+d_1+k_{31}d_3d_1)^2(d_3d_1-1)^2
\end{align*}
W.l.o.g. let us choose a spherical parametrization $d_1=\cos\phi_1$,
$d_2=\cos\phi_2\sin\phi_1$, and $d_3=\sin\phi_2\sin\phi_1$;
for symmetry reason once more, we may assume $d_3\geq 0$ so that
$\phi_1,\phi_2$ shrink to the open interval $(0,\pi)$.
Select $\lambda_1(x)=\lambda_2(x)=0$ and
$k_{12}=k_{23}=k_{31}=0$, then
\begin{align*}
8\lambda_3\sin\phi_1\sin\phi_2=&8
-(\cos\phi_1+\cos\phi_2\sin\phi_1)^2(\cos\phi_2\sin(2\phi_1)-2)^2
\\&\quad
-\sin^2\phi_1(\cos\phi_2+\sin\phi_2)^2(\sin^2\phi_1\sin(2\phi_2)-2)^2
\\&\quad
-(\cos\phi_1+\sin\phi_1\sin\phi_2)^2(\sin\phi_2\sin(2\phi_1)-2)^2
\end{align*}
\begin{figure}[h]
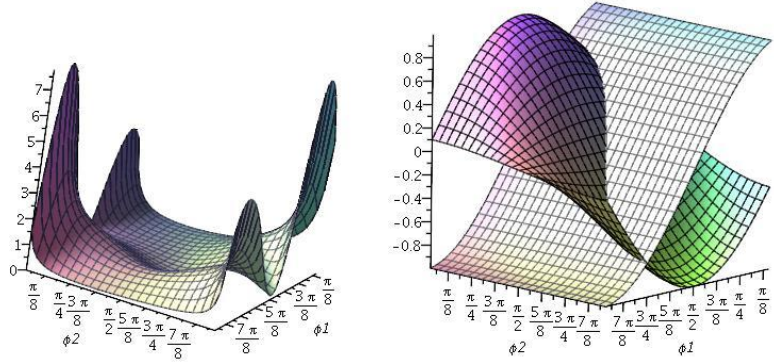
\hylabel{fig:sc3}
\vspace{-.9cm}
\begin{center}
\hspace{-1.0cm}\scalebox{0.4}{\includegraphix{sc3}}\hspace{-1.0cm}\scalebox{0.4}{\includegraphix{sc3_d}}
\caption{sufficient condition at $z=[1,-1,1]$ 
  $\lambda_3(\phi_1,\phi2)$ under
  $\lambda_1=\lambda_2=0$ (left)  and directions $d_1$, $d_2$ (right)}
\end{center}
\end{figure}

\subsubsection{first non trivial example}\hylabel{sec:sc4}
A careful look at \hyautoref{eq:failure} reveals that, on the contrary
to 1-dimensional example \hyautoref{sec:example1}
that no explicit continuous multipliers are able to fulfill the
sufficient condition since,
on the one hand $\norm{l(z;1/2)}^2-\norm{l(d;k)}^2=0$ whenever
$\abs{k}=\Half$ so that at least one component in left hand side
has to be not $0$ or else every point will be optimal, a contradiction,
and, on the other hand, the left hand side leads to a system
of $3^4-1$ equations in $4$ parameters (whatever there relationship
with the multiplier functions).
Until the sufficient condition is extended to a different case than
identity mapping, the hope remains to find a stronger condition
that would be fulfilled at optimal points without constructing
the multipliers (implicitly at most).

\begin{figure}[ht]
\begin{center}
\ifpdf\input{sc4.pdf_t}\else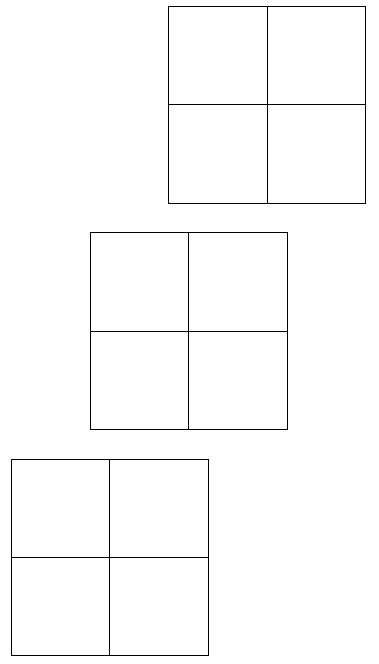\fi
\caption{sign changes at $z_1=-1$}\hylabel{fig:sc4}
\end{center}
\end{figure}
\begin{figure}[ht]
\begin{center}
\ifpdf\input{sc4_0.pdf_t}\else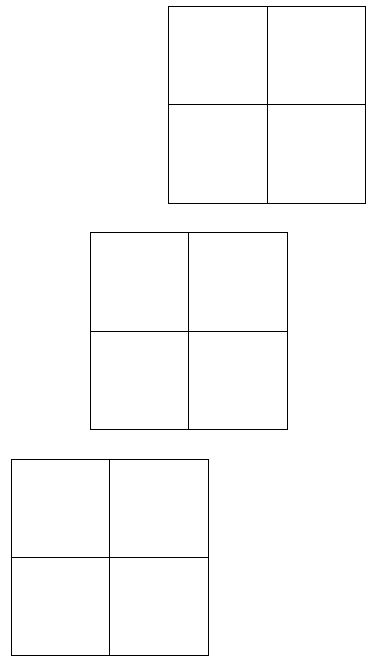\fi
\caption{sign changes at  $z_1=0$}\hylabel{fig:sc4_0}
\end{center}
\end{figure}
\begin{figure}[ht]
\begin{center}
\ifpdf\input{sc4_1.pdf_t}\else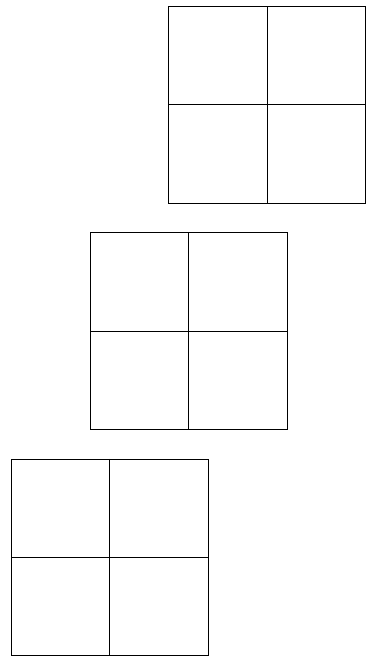\fi
\caption{sign changes at  $z_1=1$}\hylabel{fig:sc4_1}
\end{center}
\end{figure}

By enumeration, we can see from \hyautoref{fig:sc4},
\hyautoref{fig:sc4_0},
and \hyautoref{fig:sc4_1} that not optimal points,  $t(z)>3$, are symmetric with
respect to center $[0,0,0,0]$.

It suggests to consider spherical coordinates and gradient of a scalar
function in spherical coordinates:
\begin{align*}
x=\begin{bmatrix}
\rho\cos\phi_1\\\rho\cos\phi_2\sin\phi_1\\\rho\cos\phi_3\sin\phi_1\sin\phi_2\\\rho\sin\phi_3\sin\phi_1\sin\phi_2
\end{bmatrix}
,\quad
\gradient f=\begin{bmatrix}
f\gendiff{\rho}
\\f\gendiff{\phi_1}/\rho\\f\gendiff{\phi_2}/\rho\sin\phi_1\\
f\gendiff{\phi_3} /\rho\sin\phi_1\sin \phi_2
\end{bmatrix}
\end{align*}
where $f\gendiff{\xi}$ stands for $\frac{\diff f}{\diff \xi}$ for sake
of shortness.
Let us suppose for checking the stronger condition at $z$ that
\begin{align*}
\decoupling\biggl(\sum_{i\in M} \lambda_i g_i\biggr)(z)=\begin{bmatrix}\mu_1&\mu_2&\mu_3&\mu_4\end{bmatrix}^t
\end{align*}
then we have to check whether there exists a solution to the polynomial
equations
\begin{align*}
t(z)=&\mu_1(z) \rho c_1+\mu_2(z) \rho c_2 s_1+\mu_3(z) \rho c_3 s_1 s_2+\mu_4(z) \rho s_1 s_2 s_3
\\&\quad+(\rho c_1+\rho s_1 s_2 s_3)^2 (\rho^2 c_1 s_1 s_2 s_3-1)^2
\\&\quad+(\rho c_1+\rho c_2 s_1)^2 (\rho^2 c_1 c_2 s_1-1)^2
\\&\quad+(\rho c_2 s_1+\rho c_3 s_1 s_2)^2 (\rho^2 c_2 s_1^2 c_3 s_2-1)^2
\\&\quad+(\rho c_3 s_1 s_2+\rho s_1 s_2 s_3)^2 (\rho^2 c_3 s_1^2 s_2^2 s_3-1)^2\\
1=&c_i^2+s_i^2,\quad i=1\cdots 3
\end{align*}
in the variables $\rho,c_i,s_i$ related to the modulus and
cosine and sine of the spherical coordinates. 

Notice that only feasible directions are required so that further
conditions would be added to restrict spherical coordinates depending
on $z$ to fit the 4-dimensional hypercube; in particular $\rho^2\in\bigl\{1, 2, 3,
4, 5, 6, 7, 8, 9, 10, 12, 13, 16\bigr\}$.
Clearly, existence of a solution to either system of $4$ non linear
equations in $6$ variables implies the
true satisfaction of the original condition.
To our knowledge, this example overtakes the recent advances
in polynomial systems
\cite{elkadi:inria-00073109} both in dimension and degree
and remains an opened challenging issue for the identity
mapping; furthermore, should a positive answer given, no implicit definition
of the multipliers will be found so that the promise of improving a
not optimal point thanks to the subdifferential flies away.

\section{Concluding remarks}
In this article, we introduce the sign change counting function
as a natural extension of the sign counting function
whose role is prominent in studying various set of matrices.
Its \FT-subdifferential is given and an attempt is made 
to prove a sufficient condition for global optimality at a point.
The failure of this condition, which was specially tailored 
in the case of the identity map $\Rset^n\to\Rset^n$,
raises the question of new sufficient conditions
for lifted maps $\transdir:\Rset^p\to\Rset^n$ with $p>n$
that afford checking for the inverse of some linear system
in $p$ parameters related to lagrangian multipliers
(after projection back into original space $\Rset^n$).
We believe that the sign change counting function 
is interesting in itself and has furthermore,
a fundamental impact in fully splitted spectrum of matrices
as devised by our initial motivation. 
Last, it is worth mentioning, as a quite long term goal,
the design of an optimal sort algorithm associated with
a given vector;
the almost ready made extension of this function
(just think of $J(x)=\bigl\{i\lvert x_i=x_{i+1}\bigr\}$),
would lead to an algorithm
that follows the $\FT$-subdifferential to apply
necessary and sufficient exchanges, on the contrary
to general purpose sort algorithms.

\bibliographystyle{habbrv}
\def\cprime{$'$} \def\cftil#1{\ifmmode\setbox7\hbox{$\accent"5E#1$}\else
  \setbox7\hbox{\accent"5E#1}\penalty 10000\relax\fi\raise 1\ht7
  \hbox{\lower1.15ex\hbox to 1\wd7{\hss\accent"7E\hss}}\penalty 10000
  \hskip-1\wd7\penalty 10000\box7}

\end{document}